\theoremstyle{remark}
\newtheorem{remark}{Remark}[section]
\theoremstyle{definition}
\newtheorem{defn}[remark]{Definition}
\newtheorem{ex}[remark]{Example}
\newtheorem{rem}[remark]{Remark}
\theoremstyle{theorem}
\newtheorem{thm}[remark]{Theorem}
\newtheorem{prop}[remark]{Proposition}
\newtheorem{corol}[remark]{Corollary}
\newtheorem{lemma}[remark]{Lemma}
\newcommand{\lie}[1]{\mathfrak{#1}}
\newcommand{\G}{G_2}
\newcommand{\beq}{\begin{equation}}
\newcommand{\eeq}{\end{equation}}
\newcommand{\bqn}{\begin{eqnarray}}
\newcommand{\eqn}{\end{eqnarray}}
\newcommand{\bqne}{\begin{eqnarray*}}
\newcommand{\eqne}{\end{eqnarray*}}
\newcommand{\R}{{\mathbb R}}
\newcommand{\C}{{\mathbb C}}
\newcommand{\Z}{\mathbb Z}
\newcommand{\SU}{{\rm SU}}
\newcommand{\SO}{{\rm SO}}
\newcommand{\U}{\mathrm{U}}
\newcommand{\su}{\mathfrak{su}}
\newcommand{\so}{\mathfrak{so}}
\newcommand{\gt}{\mathfrak{g}}
\newcommand{\tg}{\mathfrak{t}}
\newcommand{\RP}{\mathbb{RP}}
\newcommand{\Ft}{\mathcal{F}}
\newcommand{\GL}{\mathrm{GL}(7,\R)}
\newcommand{\gl}{\mathfrak{gl}(7,\R)}
\newcommand{\hg}{\mathfrak{h}}
\newcommand{\Pt}{\mathcal{P}}
\newcommand{\It}{\mathcal{I}}
\newcommand{\Spin}{\mathrm{Spin}}
\newcommand{\spin}{\mathfrak{spin}}
\begin{document} 
\title{{Non orientable three-submanifolds of $\G-$manifolds}}
\author{Leonardo Bagaglini\thanks{Dipartimento di Matematica e Informatica {``}Ulisse Dini{"}. Università degli Studi di Firenze. Viale Morgagni, 67/a - 50134, Firenze, Italy. E-mail address: leonardo.bagaglini@unifi.it.}}
\maketitle
\abstract{By analogy with associative and co-associative cases we introduce a class of three-dimensional non-orientable submanifolds, of almost $\G-$manifolds, modelled on planes lying in a special $\G-$orbit. An application of the Cartan-K\"ahler theory shows that some three-manifold can be presented in this way. We also classify all the homogeneous ones in $\RP^7$.}

\section*{Introduction}
The compact, simply connected, exceptional, real, Lie group $\G$, as subgroup of $\SO(7)$, acts on the grassmannians $\mathrm{Gr}_k(\R^7)$ of $k-$planes. This action in transitive unless $k=3,4$ (see \cite{Bry2}). It happens that, in $k=3,4$ cases, the action has cohomogeneity one with principal isotropy a reducible representation of $\SO(3)$. In the oriented case, the remaining two special orbits are isomorphic and have isotropy representation given by a reducible, eight dimensional, $\SO(4)-$module; these planes correspond to associative and co-associative ones. But if we consider the not-oriented case, a different special orbit arises, with isotropy type $\SO(3)\times\Z_2$. Its planes are the only ones reversed by $\G$. Therefore it is natural to ask whether there exist not-orientable submanifolds, of almost $\G-$manifolds, modelled on that orbit. The answer is positive, indeed we prove that such class of manifolds is rich in examples and shows interesting properties.\par\medskip
The paper is structured as follows. First, in Section \S\ref{sec1}, we analyse the action of $\G$ on grassmannians, proving the structure theorem of the orbit space. In Section \S\ref{sec2} we introduce the main definition of \emph{$\varphi-$planes}, which determines our local model. Next, in Section \S\ref{sec3}, we recall fundamental concepts of Cartan-K\"ahler theory and in Section \S\ref{sec4} we prove that some closed, analytic, three-manifold can be presented as \emph{$\varphi-$}submanifold of an open $\G-$manifold (Theorem \ref{thm} and Corollary \ref{cor1}). In Section \S\ref{sec5} we classify all the homogeneous not-orientable $\varphi-$manifolds in $\RP^7$, when equipped with the canonical nearly parallel structure (Theorem \eqref{homogphiman}).\par\medskip
Generally we refer to manifolds equipped with a $\G-$structure as $\G-$manifolds in the torsion-free case, otherwise as \emph{almost} $\G-$manifolds.

\section{$\G$ actions on Grassmannians}\label{sec1}
In this section we describe the action of $\G$ on grassmannians of three and four-planes of $\R^7$, both oriented and non oriented. In the sequel if $Z$ is a compact manifold on which a compact Lie group $G$ acts with cohomegenity one, by notation $Z/G=[G/H_1|G/K|G/H_2]$ we mean that the orbits space $Z/G$ is diffeomorphic to the closed interval and $G/H_1$, $G/H_2$, $G/K$ are models for special orbits and generic one respectively. We refer to \cite{Bre} for an exhaustive treatment of general theory of compact Lie groups. 
\par\medskip
Let $X$ be the grassmannian $\mathrm{Gr}^+_4(\R^7)$ of oriented four-planes of $\R^7$. We often denote planes $\xi\in X$ by $u_1\wedge u_2\wedge u_3\wedge u_4$, for ordered linear independent vectors $u_1,u_2,u_3,u_4\in\xi$. Identify $\R^7$ with the reducible $\SO(4)-$module 
\begin{equation}\label{mod1}
\quad\R^4\oplus\Lambda^2_-\R^4,
\end{equation}
where $\Lambda^2_-\R^4$ represents the pre-dual space of anti-self dual two-forms on $\R^4$.\\ 
Let $(x^1,x^2,x^3,r^0)$ be standard coordinates on $\R^4$ so that the two-forms
$$\omega_1=dr^{0}\wedge dx^1+dx^2\wedge dx^3,\quad\omega_2=dr^{0}\wedge dx^2-dx^1\wedge dx^3,\quad\omega_3=dr^{0}\wedge dx^3+dx^1\wedge dx^2,$$
are a basis of $\Lambda^2_-(\R^4)^*$. Consider $(r^1,r^2,r^3)$ the dual basis of $(\omega_1,\omega_1,\omega_3)$ in $\Lambda^2_-\R^4$. Then the three-form
\begin{equation}\label{stdcoo}
\varphi=\omega_1\wedge dr^1+\omega_2\wedge dr^2+\omega_3\wedge dr^3-dr^{123},
\end{equation}
is stable\footnote{Its $\GL-$orbit is open.} and positive\footnote{It induces a positive definite metric.} (see \cite{Hit}), therefore the action of its stabiliser, in $\GL$, on $\R^7$  corresponds to the seven dimensional irreducible representation $\G\subset\SO(7),$ where $(\underline{x},\underline{r})$ are orthonormal and positive oriented coordinates. We refer to $\varphi$ as the \emph{fundamental} three-form associated to $\G$\footnote{Such form is defined by (the irreducible seven dimensional representation of) $\G$ up to constant. It can be fixed by choosing an orientation and imposing $||\varphi||^2=7$.}.
\begin{defn}
We call  $(\underline{x},\underline{r})$ \emph{Cayley coordinates} if $\varphi$ is given by $\eqref{stdcoo}$.
\end{defn}
\begin{rem}
Observe that the Hodge dual $\phi$ of $\varphi$, also known as the \emph{fundamental} four-form, is given by
$$\phi=-\omega_1\wedge dr^{12}+\omega_2\wedge dr^{13}-\omega_3\wedge dr^{12} +dr^0\wedge dx^1 \wedge dx^2 \wedge dx^3.$$
with respect to Cayley coordinates.
\end{rem}
\begin{prop}\cite{Fri}
The action of $\G$ on $X$ has cohomogeneity one with principal isotropy $\SO(3)\subset\SO(4)$, via representation \eqref{mod1}. There are two $\G$-isomorphic special orbits, each one of isotropy type $\SO(4)$, which are through
$$\xi_+=\frac{\partial}{\partial r^0}\wedge\frac{\partial}{\partial x^1}\wedge\frac{\partial}{\partial x^2}\wedge\frac{\partial}{\partial x^3}\quad\text{and}\quad \xi_-=-\xi_+,$$
respectively.
\end{prop}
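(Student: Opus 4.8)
The plan is to reduce the whole statement to two isotropy computations and then invoke the slice theorem. Note first that $\dim X=\dim\mathrm{Gr}^+_4(\R^7)=4\cdot 3=12$ while $\dim\G=14$, so it is enough to locate one orbit, determine its stabiliser, and analyse the induced slice representation: the cohomogeneity and the principal isotropy will then fall out simultaneously.

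First I would treat the distinguished plane $\xi_+$. A direct substitution in \eqref{stdcoo} (or in the expression for $\phi$ in the preceding Remark) shows that $\varphi|_{\xi_+}=0$ and $\phi|_{\xi_+}=\mathrm{vol}_{\xi_+}$, i.e. $\xi_+=\R^4$ is co-associative and $\xi_+^\perp$ is exactly the summand $\Lambda^2_-\R^4$ of \eqref{mod1}. Any $g\in\G$ fixing the oriented plane $\xi_+$ preserves the splitting $\R^7=\R^4\oplus\Lambda^2_-\R^4$, hence acts as a pair $(A,B)$ with $A\in\SO(4)$ (it preserves $\xi_+$ together with its orientation) and, since $\G\subset\SO(7)$ and $\det A=1$, with $B\in\SO(3)$. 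The core of this step is to feed $(A,B)$ into \eqref{stdcoo} and check that invariance of $\varphi$ forces $B$ to be the transformation $\rho(A)\in\SO(3)$ that $A$ induces on anti-self-dual two-forms; the stabiliser is then the graph $\{(A,\rho(A)):A\in\SO(4)\}\cong\SO(4)$, acting on $\R^7$ precisely through \eqref{mod1}. This produces an orbit $\G\cdot\xi_+\cong\G/\SO(4)$ of dimension $14-6=8$.

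Next I would compute the slice representation at $\xi_+$. Identifying $T_{\xi_+}X\cong\mathrm{Hom}(\R^4,\Lambda^2_-\R^4)\cong\R^4\otimes\Lambda^2_-\R^4$ as an $\SO(4)$-module and splitting it into the eight-dimensional isotropy representation of $\G/\SO(4)$ and a complementary $\R^4$ (most transparently via $\Spin(4)=\SU(2)\times\SU(2)$, where the module reads $(\mathbf{2},\mathbf{2})\otimes(\mathbf{1},\mathbf{3})=(\mathbf{2},\mathbf{4})\oplus(\mathbf{2},\mathbf{2})$), the normal (slice) representation is the standard vector representation of $\SO(4)$ on $\R^4$. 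Since $\R^4/\SO(4)=[0,\infty)$ is one-dimensional, the slice theorem shows at once that the action has cohomogeneity one and that a principal isotropy group is $\mathrm{Stab}_{\SO(4)}(v)=\SO(3)$ for $0\neq v\in\R^4$; this is the $\SO(3)\subset\SO(4)$ fixing a vector, under which $\R^7$ decomposes as $\R\oplus\R^3\oplus\R^3$, i.e. the reducible module announced in \eqref{mod1}.

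Finally I would pin down the global orbit space. Since $X$ is compact and connected and the action has cohomogeneity one, $X/\G$ is a closed interval with exactly two singular orbits, at its endpoints. The $\G$-invariant calibration function $\lambda(\xi)=*_\xi(\phi|_\xi)$ takes values in $[-1,1]$, with $\lambda=\pm 1$ exactly on co-associative planes; as $\lambda(\xi_+)=1$ and $\lambda(\xi_-)=\lambda(-\xi_+)=-1$, the orbits through $\xi_+$ and $\xi_-$ are distinct, and being the only two singular orbits they must be the endpoints. They are $\G$-isomorphic because orientation reversal $\sigma\colon\xi\mapsto-\xi$ is a $\G$-equivariant diffeomorphism (indeed $g\cdot(-\xi)=-(g\cdot\xi)$ for $g\in\G\subset\SO(7)$) carrying $\G\cdot\xi_+$ onto $\G\cdot\xi_-$. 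I expect the main obstacle to be the stabiliser computation of the second paragraph — verifying that invariance of $\varphi$ couples $A$ and $B$ rigidly enough to cut the isotropy down to $\SO(4)$ and no larger — together with the $\SO(4)$-equivariant identification of the orbit tangent space needed to read off the slice representation.
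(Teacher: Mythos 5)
Your argument is correct, but be aware that the paper itself offers no proof of this proposition: it is imported wholesale from \cite{Fri}, and the only supporting computation the paper supplies afterwards is the explicit path $\xi_\theta$ joining $\xi_-$ to $\xi_+$, whose purpose is to locate the principal orbit $\mathcal{O}_0$ rather than to establish the orbit structure. So your proposal is a self-contained substitute rather than a variant of the paper's argument, and its essential steps check out: invariance of the pairing term $\omega_1\wedge dr^1+\omega_2\wedge dr^2+\omega_3\wedge dr^3$ in \eqref{stdcoo} does force the stabiliser of the oriented coassociative plane $\xi_+$ to be the graph $\{(A,\rho(A))\,:\,A\in\SO(4)\}$ of the induced action $\rho$ on $\Lambda^2_-\R^4$, which is exactly the $\SO(4)$ of \eqref{mod1}; the Clebsch--Gordan splitting $(\mathbf{2},\mathbf{2})\otimes(\mathbf{1},\mathbf{3})=(\mathbf{2},\mathbf{4})\oplus(\mathbf{2},\mathbf{2})$ correctly identifies the normal space at $\xi_+$ as the vector representation of $\SO(4)$, since the $8$-dimensional orbit tangent space must be the unique summand of that dimension; the slice theorem then gives cohomogeneity one together with principal isotropy $\mathrm{Stab}_{\SO(4)}(v)\cong\SO(3)$, whose decomposition $\R^7=\R\oplus\R^3\oplus\R^3$ is precisely the group $K$ written down in Section~2 of the paper; and equivariance of $\xi\mapsto-\xi$ gives the $\G$-isomorphism of the two special orbits. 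What your route buys is a genuine proof plus the explicit slice data that the paper only asserts (the eight-dimensional isotropy module of the special orbits, the reducible principal isotropy); what the paper's path $\xi_\theta$ buys is the concrete interpolation needed immediately afterwards to characterize $\mathcal{O}_0$ as the zero locus of $\phi$, which your argument does not produce by itself, though your invariant function $\lambda$ plays the same role. One step to tighten: compactness and cohomogeneity one alone allow $X/\G$ to be a circle as well as a closed interval; you must invoke the already-established existence of the non-principal orbit $\G\cdot\xi_+$ (codimension four) to exclude the circle before asserting the interval structure with two singular endpoint orbits. That is a one-sentence repair, not a gap in substance.
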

\par
A path $\xi$, parametrizing the orbits, starting from $\xi_-$ and ending at $\xi_+$, is given by
\begin{gather*}
\xi_\theta=\left(\mathrm{sin}(\theta)\frac{\partial}{\partial r^0}+\mathrm{cos}(\theta)\wedge \frac{\partial}{\partial r^1}\right)\wedge\frac{\partial}{\partial x^1}\wedge\frac{\partial}{\partial x^2}\wedge\frac{\partial}{\partial x^3},\quad \forall \theta\in\left[-\frac{\pi}{2},\frac{\pi}{2}\right].
\end{gather*}
Observe that 
$$\phi|_{\xi_\theta}=\mathrm{sin}(\theta)(\mathrm{sin}(\theta)dr^0+\mathrm{cos}(\theta)dr^1)\wedge dx\wedge dy\wedge dz,$$
thus there exists only one (principal) orbit $\mathcal{O}_0$, through $\xi_0$, such that 
$$\mathcal{O}_0=\left\lbrace\sigma\in X\;|\;\phi|_{\sigma}=0\right\rbrace.$$
Hence the following holds.
\begin{corol}
$\G$ can reverse planes lying in $\mathcal{O}_0$.
\end{corol}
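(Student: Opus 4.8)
The plan is to combine the $\G$-equivariance of the orientation-reversing involution with the characterisation of $\mathcal{O}_0$ as the zero locus $\{\sigma\in X : \phi|_\sigma=0\}$ obtained above. Write $\iota\colon X\to X$ for the involution that reverses the orientation of a plane, $\iota(\sigma)=-\sigma$. Since every $g\in\G\subset\SO(7)$ acts linearly on $\R^7$, one has $g\cdot(-\sigma)=-(g\cdot\sigma)$, so $\iota$ commutes with the $\G$-action and therefore permutes the $\G$-orbits of $X$.

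First I would check that $\iota$ preserves $\mathcal{O}_0$. As $\phi$ is $\G$-invariant and its restriction to an oriented four-plane changes sign when the orientation is reversed, $\phi|_{-\sigma}=-\phi|_{\sigma}$; hence $\phi|_\sigma=0$ if and only if $\phi|_{-\sigma}=0$, and the defining condition of $\mathcal{O}_0$ is stable under $\iota$. In particular $-\xi_0$ satisfies $\phi|_{-\xi_0}=0$, so $-\xi_0\in\mathcal{O}_0$. Because $\G$ acts transitively on the principal orbit $\mathcal{O}_0=\G\cdot\xi_0$ (by the Proposition above), there is $g_0\in\G$ with $g_0\cdot\xi_0=-\xi_0$: this element reverses $\xi_0$. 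For a general plane $\sigma=h\cdot\xi_0\in\mathcal{O}_0$ the conjugate $hg_0h^{-1}$ then sends $\sigma$ to $-\sigma$, so every plane of $\mathcal{O}_0$ is reversed by some element of $\G$. This is precisely the extra $\Z_2$ in the isotropy type $\SO(3)\times\Z_2$ announced in the Introduction.

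To underline why $\mathcal{O}_0$ is special, I would contrast it with the two special orbits: there $\phi|_{\xi_\pm}=\pm\,dr^0\wedge dx^1\wedge dx^2\wedge dx^3\neq 0$, so $\iota$ interchanges the orbit of $\xi_+$ with the distinct orbit of $\xi_-=-\xi_+$, and no element of $\G$ can reverse an associative or co-associative plane. The only point requiring care is the sign bookkeeping $\phi|_{-\sigma}=-\phi|_\sigma$ together with the remark that $\iota$ is genuinely $\G$-equivariant; both are immediate from linearity of the action, so there is no serious obstacle, and the statement follows at once from transitivity on $\mathcal{O}_0$.
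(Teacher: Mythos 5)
Your proposal is correct and rests on exactly the same idea as the paper's proof: the defining condition $\phi|_\sigma=0$ of $\mathcal{O}_0$ is orientation-independent, so $\sigma$ and $-\sigma$ lie in the same $\G$-orbit and transitivity produces the reversing element. You merely spell out the equivariance of $\iota$ and the conjugation step that the paper leaves implicit (and your sign claim $\phi|_{-\sigma}=-\phi|_\sigma$ versus the pullback convention $\phi|_{-\sigma}=\phi|_\sigma$ is immaterial, since either way the vanishing condition is preserved).
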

\begin{proof}
The corollary follows since condition $\phi|_\sigma=0$, if $\sigma\in X$, does not depend on the orientation, hence both $\sigma$ and $-\sigma$ belong to the same orbit.
\end{proof}
 \par
Let 
\begin{equation*}
\varepsilon:X\longrightarrow X,
\end{equation*}
be the map which reverses the orientations, and 
$$p:X\longrightarrow Y,$$
be the quotient map over $Y=X/\varepsilon$, the grassmannian $\mathrm{Gr}_4(\R^7)$ of non oriented planes.\\
Then 
$$\varepsilon(\mathcal{O}_{\pm})=\mathcal{O}_{\mp}\quad\text{and}\quad\varepsilon(\mathcal{O}_0)=\mathcal{O}_0.$$
\begin{prop}
The action of $\G$ on $Y$ has cohomogeneity one with principal isotropy $\SO(3)$. There are two, not equivalent, special orbits, of isotropy types $\SO(4)$ and $\SO(3)\times\mathbb{Z}_2$ respectively.
\end{prop}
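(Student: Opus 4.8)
The plan is to descend the cohomogeneity-one action from $X$ to $Y=X/\varepsilon$ and to read off the orbit data on $Y$ from the known structure on $X$. Since every $g\in\G$ acts by $g\cdot(u_1\wedge\cdots\wedge u_4)=gu_1\wedge\cdots\wedge gu_4$ while $\varepsilon$ merely reverses orientation, the involution $\varepsilon$ is $\G$-equivariant and free (an oriented plane is never equal to its reverse). Hence the $\G$-action descends to $Y$, and because the $\G$- and $\Z_2=\langle\varepsilon\rangle$-actions commute, the orbit space satisfies $Y/\G\cong(X/\G)/\bar\varepsilon$, where $\bar\varepsilon$ is the involution induced by $\varepsilon$ on the interval $X/\G$.

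First I would identify $\bar\varepsilon$. By the cited proposition $X/\G$ is the interval swept out by the path $\xi_\theta$, $\theta\in[-\pi/2,\pi/2]$, and the orbits are separated by the calibration invariant $c(\sigma):=\phi|_\sigma/\mathrm{vol}_\sigma$, which on the cross-section equals $c(\xi_\theta)=\sin\theta$ by the computation of $\phi|_{\xi_\theta}$ above. As $\phi$ is an ambient form while $\mathrm{vol}_\sigma$ changes sign under reversal, we have $c(\varepsilon(\sigma))=-c(\sigma)$, so $\varepsilon(\mathcal{O}_\theta)=\mathcal{O}_{-\theta}$ and $\bar\varepsilon$ is the reflection $\theta\mapsto-\theta$. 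Consequently $Y/\G\cong[0,\pi/2]$ is again a closed interval, the action has cohomogeneity one, and its two special orbits are the images of the endpoints $\theta=\pi/2$ (where $\mathcal{O}_+$ and $\mathcal{O}_-$ get identified) and $\theta=0$ (the fixed orbit $\mathcal{O}_0$).

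Next I would compute the three isotropy types from the fibres of $p$. For $\theta\in(0,\pi/2)$ and for $\theta=\pi/2$ the orbit is not $\varepsilon$-invariant (indeed $\varepsilon(\mathcal{O}_\theta)=\mathcal{O}_{-\theta}$ is a different orbit, and $\varepsilon(\mathcal{O}_+)=\mathcal{O}_-\neq\mathcal{O}_+$), so $p$ restricts to a bijection there; hence the principal isotropy remains $\SO(3)$ and the isotropy over $\theta=\pi/2$ remains $\SO(4)$. The interesting endpoint is $\theta=0$: here $\varepsilon(\mathcal{O}_0)=\mathcal{O}_0$, so $p|_{\mathcal{O}_0}$ is a free double cover and the stabiliser $K$ of the unoriented plane $[\xi_0]$ sits in a short exact sequence $1\to\SO(3)\to K\to\Z_2\to 1$, the kernel being the oriented stabiliser and the quotient recording whether the orientation is preserved. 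This sequence is non-trivial precisely by the Corollary, which provides a $g_0\in\G$ with $g_0\xi_0=-\xi_0$.

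The main point to settle is that this extension is the direct product $\SO(3)\times\Z_2$ and not some other extension, and I expect this to be the only delicate step. It is forced by two features of $\SO(3)$: trivial centre and no outer automorphisms. Indeed, conjugation by $g_0$ is an automorphism of $\SO(3)$, hence inner, say $\mathrm{Ad}(h_0)$ with $h_0\in\SO(3)$; replacing $g_0$ by $h_0^{-1}g_0$ yields an element of the non-trivial coset that centralises $\SO(3)$, and its square lies in $\SO(3)\cap Z(K)\subset Z(\SO(3))=\{1\}$, so it is an involution commuting with $\SO(3)$. Therefore $K\cong\SO(3)\times\Z_2$. Since $\dim\SO(4)=6\neq 3=\dim(\SO(3)\times\Z_2)$, the two special orbits have non-isomorphic isotropy and are inequivalent, which completes the proof; everything apart from this last component analysis is a direct transcription of the oriented picture through the double cover $p$.
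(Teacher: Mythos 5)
Your argument is correct, and it shares the paper's master plan --- push the cohomogeneity-one structure on $X$ down through the $\G$-equivariant double cover $p$ --- but both key steps are carried out differently. For cohomogeneity one, the principal isotropy and the $\SO(4)$ orbit, the paper does not invoke the invariant $c(\sigma)=\phi|_{\sigma}/\mathrm{vol}_{\sigma}$: it chooses a connected $\G$-invariant neighbourhood $\mathcal{V}$ of $\mathcal{O}_+$ with $\varepsilon(\mathcal{V})\cap\mathcal{V}=\emptyset$, so that $p|_{\mathcal{V}}$ is a $\G$-equivariant diffeomorphism onto an invariant neighbourhood of $p(\mathcal{O}_+)$; your version instead identifies the induced involution on the orbit interval, which is equivalent in substance and has the small benefit of exhibiting $Y/\G\cong[0,\pi/2]$ explicitly. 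The genuine divergence is at $p(\mathcal{O}_0)$: the paper only observes that the oriented stabiliser $K\cong\SO(3)$ is a normal, index-two subgroup of the unoriented stabiliser $H$ and then asserts $H=K\times\Z_2$; as written this is incomplete, since an index-two normal subgroup does not by itself force a direct product --- the paper's real justification is the explicit matrix description of $H$ (an involution commuting with $K$) written down later in \S\ref{sec2}. Your splitting argument --- conjugation by an orientation-reversing element of $H$ is an automorphism of $\SO(3)$, hence inner because $\mathrm{Out}(\SO(3))$ is trivial; correct it by an inner factor to a centralising representative, whose square lies in $Z(\SO(3))=\{1\}$ --- supplies exactly this missing step abstractly, using only that $\SO(3)$ is centreless with no outer automorphisms. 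So the paper's route is shorter but leans on the explicit local model, while yours is self-contained and coordinate-free; both reach the same classification of the three isotropy types, and your dimension count correctly shows the two special orbits are inequivalent.
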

\begin{proof}
It is easy to see that there exists an open, connected,  $\G-$invariant neighbourhood $\mathcal{V}$ of $\mathcal{O}_+$ such that $\varepsilon(\mathcal{V})\cap\mathcal{V}=\emptyset$.  Hence the restriction of $p$
$$p|_\mathcal{V}:\mathcal{V}\longrightarrow p(\mathcal{V}),$$ is a $\G-$equivariant diffeomorphism onto an open $\G-$invariant neighbourhood of $p(\mathcal{O}_+)$. Then it follows that the action has cohomogeneity one, principal isotropy $\SO(3)$ and one special orbit $p(\mathcal{O_+})$ of isotropy type $\SO(4)$. Now let $H$ and $K$ be the stabilisers of $p(\xi_0)\in Y$ and $\xi_0\in X$ respectively. Since 
$$hKh^{-1}\subseteq K,\quad\forall h\in H,$$ 
and $p$ is a double cover, $K$ is a normal subgroup of $H$ with index $2$. Thus $H=K \times\mathbb{Z}_2$ is the isotropy of the special orbit $p(\mathcal{O}_0)$ through $p(\xi_0)$. Summarizing
$$Y/\G=\left[\G/SO(4)|\G/\SO(3)|\G/\SO(3)\times\mathbb{Z}_2\right].$$
\end{proof}\medskip
The analysis we have just completed turns out to describe also the orbits space of both $\mathrm{Gr}_3^+(\R^7)$ and $\mathrm{Gr}_3(\R^7)$, as shown by the following remark.  
\begin{rem}
Since
$$\mathrm{Gr}_4^+(\R^7)\cong\mathrm{Gr}_3^+(\R^7)\quad\text{and}\quad \mathrm{Gr}_4(\R^7)\cong\mathrm{Gr}_3(\R^7),$$
by $\SO(7)-$equivariant isomorphisms, such manifolds are isomorphic as $\G-$spaces too.
\end{rem}

\par\bigskip
\section{Local model}\label{sec2}
In this section we investigate the local model of three-planes lying in $\mathcal{O}_0$.\medskip\par 
By analogy with the previous section let 
$$p:\mathrm{Gr}_3^+(\R^7)\rightarrow \mathrm{Gr}_3(\R^7),$$
be the two-fold covering which forgets the orientations.\\
\begin{defn}
Planes lying in $p(\mathcal{O}_0)\subset \mathrm{Gr}_3(\R^7)$ are called $\varphi-$planes.
\end{defn}
\begin{rem}
$\varphi-$planes are characterized by the the vanishing of the three-form $\varphi$. In fact the path
$$\xi_{\theta}=\left(\mathrm{sin}(\theta)\frac{\partial}{\partial t^1}+\mathrm{cos}(\theta)\frac{\partial}{\partial x^1}\right)\wedge\frac{\partial}{\partial x^2}\wedge\frac{\partial}{\partial x^3},\quad \theta\in\left[-\frac{\pi}{2},\frac{\pi}{2}\right],$$
parametrizes the orbits and meets $\mathcal{O}_0$ in $\theta=0$.
\end{rem}
Now, consider Cayley coordinates $(\underline{x},\underline{r})$ and let $\xi=\frac{\partial}{\partial x^1}\wedge\frac{\partial}{\partial x^2}\wedge\frac{\partial}{\partial x^3}\in \mathcal{O}_0$. Its stabiliser in $\G$ is  
\begin{equation*}
K=\left\{\left(\begin{matrix}
a&0&0\\
0&1&0\\
0&0&a\\
\end{matrix}\right)\;|\;a\in\SO(3)\right\}\cong \SO(3).
\end{equation*}
While the stabiliser of its projection $p(\xi)$ is $H=K\times \Z_2,$ where
 \begin{equation*}
\Z_2=\left\lbrace\left(\begin{matrix}
1_{\SO(3)}&0&0\\
0&1&0\\
0&0&1_{\SO(3)}\\
\end{matrix}\right),\left(\begin{matrix}
-1_{\SO(3)}&0&0\\
0&-1&0\\
0&0&1_{\SO(3)}\\
\end{matrix}\right)\right\rbrace.
\end{equation*}
Consider $\xi^\perp$, explicitly $\xi^{\perp}=\frac{\partial}{\partial r^0}\wedge\frac{\partial}{\partial r^1}\wedge\frac{\partial}{\partial r^2}\wedge\frac{\partial}{\partial r^3},$
and decompose it as orthogonal sum $\xi^\perp=\mu\oplus\lambda,$ where $\mu=\mathrm{Span}(\frac{\partial}{\partial r^0})$ and $\lambda=\mathrm{Span}(\frac{\partial}{\partial r^1},\frac{\partial}{\partial r^2},\frac{\partial}{\partial r^3})$. Let $H_\xi$ and $H_{\xi^{\perp}}$ be the groups $\mathrm{O}(\mu)\times\SO(\lambda)$ and $\mathrm{O}(\xi^{\perp})$ respectively.
Then the following proposition holds.
\begin{prop}\label{rapp}
There exist omomorphisms $f_1$ and $f_2$ such that the diagrams
\begin{equation*}
\begin{tikzcd}
H\arrow{r}{}\arrow{d}{f_1}&\mathrm{GL}(\xi)\\
H_\xi
\arrow{ru}{}&
\end{tikzcd}
\quad\text{and}\quad
\begin{tikzcd}
H\arrow{r}{}\arrow{d}{f_2}&\mathrm{GL}(\xi^\perp)\\
H_{\xi^\perp}\arrow{ru}{}&
\end{tikzcd}
\end{equation*}are commutative.
Moreover the representations of $H$ on $\xi^{\perp}=\mu\oplus\lambda$ and $\Lambda^3\xi^*\oplus\Lambda^2\xi^*$ are equivalent, with an explicit equivariant isomorphism given by
\begin{equation*}
\begin{CD}
\mu\oplus\lambda@>>>\Lambda^3\xi^*\oplus\Lambda^2\xi^*,\\
(u,v)@>>> (i_u\phi|_\xi,i_v\varphi|_\xi).
\end{CD}
\end{equation*}
\end{prop}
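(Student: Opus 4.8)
The plan is to reduce everything to the explicit action of the two generators of $H=K\times\Z_2$ on the orthogonal summands $\xi$, $\mu$ and $\lambda$, which can be read directly off the matrix descriptions of $K$ and of the nontrivial element $\sigma\in\Z_2$ given above. Concretely, a rotation $a\in K$ acts as $a$ on $\xi$, trivially on $\mu$ and as $a$ on $\lambda$, while $\sigma$ acts as $-I$ on $\xi$, as $-1$ on $\mu$ and as $+I$ on $\lambda$; in particular $H$ preserves the orthogonal splitting $\R^7=\xi\oplus\mu\oplus\lambda$ and restricts to orthogonal maps on each factor.

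For the second diagram I would simply let $f_2$ be the restriction of the $H$-action to $\xi^{\perp}$. Since $H\subset\G\subset\SO(7)$ is orthogonal, $f_2$ lands in $\mathrm{O}(\xi^{\perp})=H_{\xi^{\perp}}$, and the triangle commutes tautologically, the slanted arrow being the inclusion $\mathrm{O}(\xi^{\perp})\hookrightarrow\mathrm{GL}(\xi^{\perp})$. For the first diagram I would take $f_1$ to be the $H$-action on $\xi^{\perp}=\mu\oplus\lambda$; the computation above shows that it preserves $\mu$ and $\lambda$ and acts with determinant $+1$ on $\lambda$, so $f_1$ factors through $\mathrm{O}(\mu)\times\SO(\lambda)=H_\xi$. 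The slanted arrow $H_\xi\to\mathrm{GL}(\xi)$ is then the natural map $(\epsilon,b)\mapsto\epsilon\,\rho(b)$, where $\rho\colon\SO(\lambda)\to\SO(\xi)$ is the canonical isomorphism and $\epsilon\in\mathrm{O}(\mu)=\{\pm1\}$ supplies the orientation (recall that in odd dimension $\mathrm{O}(\xi)=\SO(\xi)\times\{\pm I\}$). Commutativity is then checked on the two generators: $a\mapsto(1,a)\mapsto a$ and $\sigma\mapsto(-1,I)\mapsto -I$, matching the action on $\xi$ recorded above.

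It remains to prove that $\Phi\colon\mu\oplus\lambda\to\Lambda^3\xi^*\oplus\Lambda^2\xi^*$, $(u,v)\mapsto(i_u\phi|_\xi,\,i_v\varphi|_\xi)$, is an $H$-equivariant isomorphism. Bijectivity I would verify by a direct computation in Cayley coordinates: contracting the explicit expressions for $\phi$ and $\varphi$ yields $i_{\partial_{r^0}}\phi|_\xi=dx^{123}$ and $i_{\partial_{r^j}}\varphi|_\xi=\pm dx^{kl}$ for the appropriate pairs, so $\Phi$ carries the coordinate basis of $\mu\oplus\lambda$ to a basis of $\Lambda^3\xi^*\oplus\Lambda^2\xi^*$. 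Equivariance I would obtain abstractly rather than by hand: for $g\in H\subset\G$ one has $g_*\varphi=\varphi$ and $g_*\phi=\phi$ since $\varphi,\phi$ are the $\G$-invariant fundamental forms, while naturality of the interior product gives $i_{gu}\,g_*\alpha=g_*(i_u\alpha)$; because $g$ preserves $\xi$, restriction to $\xi$ intertwines the ambient action with the induced action on $\Lambda^\bullet\xi^*$, and combining these yields $\Phi(g\cdot w)=g\cdot\Phi(w)$.

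The coordinate bookkeeping is routine, amounting only to contracting the two given forms and tracking signs. The main obstacle is conceptual rather than computational: pinning down the correct natural map $H_\xi\to\mathrm{GL}(\xi)$ so that the orientation data carried by $\mathrm{O}(\mu)$ and the rotational data carried by $\SO(\lambda)$ assemble exactly into the $H$-action on $\xi$, and keeping the determinant and orientation conventions consistent with the identifications $\mu\cong\Lambda^3\xi^*$ and $\lambda\cong\Lambda^2\xi^*\cong\xi$ that underlie the equivariant isomorphism $\Phi$.
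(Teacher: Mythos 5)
Your proposal is correct and is essentially the paper's argument carried out in full: the paper's proof is just the one-line assertion that everything follows from ``a straightforward computation performed in Cayley coordinates,'' which is exactly what you do when you read off the generator actions of $K$ and of the nontrivial element of $\Z_2$ on $\xi\oplus\mu\oplus\lambda$ and contract $\varphi$ and $\phi$ against the coordinate basis of $\xi^\perp$. Your one refinement --- deducing equivariance of $(u,v)\mapsto(i_u\phi|_\xi,i_v\varphi|_\xi)$ abstractly from $\G$-invariance of $\varphi,\phi$ and naturality of the interior product, rather than by checking it entrywise --- is a clean shortcut fully consistent with the paper's intent.
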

\begin{proof}
The proof is a straightforward computation performed in Cayley coordinates.
\end{proof}
\section{Cartan-K\"ahler theory}\label{sec3}
For the sake of clarity we recall the key concepts and theorems of Cartan-K\"ahler theory and its applications to $\G-$geometry. For a detailed treatment we refer to \cite{BryB} and \cite{Bry}.\par\medskip
Let $N$ be a manifold and $\It$ be a differential ideal of the ring $\Omega^*(N)$. Denote by $\It^n$ the intersection $\It\cap\Omega^n(N)$ and suppose that $\It^0$ is empty.\\
Let $E$ be a $n-$dimensional subspace of some tangent space, $T_xN$, of $N$. We say $E$ an \emph{integral element} of $\It$, equivalently $E\in V^n(\It)$, if every $n-$form lying in $\It$ vanishes when restricted to $E$. An integral element is said to be \emph{ordinary} if, locally,  $V^n(\It)\subset\mathrm{Gr}_n(TN)$ appears as the zero locus of some non zero functions with linear independent differentials.\\
If $E$ is an integral element of $\It$ we define its polar space $H(E)$ as the set of $n+1-$dimensional integral extensions of $E$, explicitly 
$$H(E)=\left\{v\in T_xX\,|\,(i_v\delta)|_E=0,\;\forall \delta\in\It^{n+1}\right\}.$$ 
The \emph{extension rank} of $E$ is the integer $r(E)=\mathrm{dim}H(E)-n-1$. Observe that $E$ is maximal if and only if $r(E)=-1$.
We say $E$ \emph{regular} if it is ordinary and $r$ is locally constant around it.\\
An integral manifold $Y$ of  $\It$ is a submanifold of $N$ whose tangent spaces are all integral elements. It is said to be ordinary, or regular, if its tangent spaces are. Now we are ready to state the Cartan-K\"ahler Theorem. 
\begin{thm}[Cartan-K\"ahler Theorem]
Let $N$ be an analytic manifold, $\It\subset\Omega^*(N)$ be a analytic, differential ideal and $X$ be one of its analytic, integral, $n-$dimensional manifolds. Suppose $X$ is regular, with extension rank $r\geq 0$, and let $Z$ be an analytic submanifold, of codimension $r$, containing $X$ and transversal to each of its polar spaces.\\
Then there exists an analytic, integral, $(n+1)-$dimensional manifold $Y$ satisfying $$X\subset Y\subset Z.$$
Moreover, if $Y'$ is a manifold with the same properties, then $Y\cap Y'$ is still an integral $(n+1)-$dimensional manifold.
\end{thm}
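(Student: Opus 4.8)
The plan is to reduce the extension problem to an analytic Cauchy problem and invoke the Cauchy--Kovalevskaya theorem; the regularity hypothesis and the transversality of $Z$ are precisely what guarantee that the resulting system can be put into normal (Cauchy--Kovalevskaya) form. First I would work locally near a point $x\in X$. Since $E=T_xX$ is a regular integral element, its polar space $H(E)$ has constant dimension $n+1+r$ in a neighbourhood, and by hypothesis $Z$ meets each polar space transversally; the transversal-intersection count $\mathrm{dim}\,(T_xZ\cap H(E))=(\mathrm{dim}\,N-r)+(n+1+r)-\mathrm{dim}\,N=n+1$ shows that $T_xZ\cap H(E)$ is exactly the $(n+1)$-dimensional integral element we hope to realise as a tangent space of $Y$. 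I would then choose analytic coordinates $(x^1,\dots,x^n,t,y^1,\dots,y^s)$ on $N$ adapted to this picture: $X$ is cut out by $t=0$ together with the $y$'s being prescribed analytic functions of the $x^i$, the extra direction $\partial/\partial t$ spans the new leg of the integral element inside $T_xZ$, and $Z$ is described by holding $r$ of the ambient coordinates fixed.

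Next I would seek $Y$ as a graph $y^a=F^a(x^1,\dots,x^n,t)$ lying in $Z$, with $F^a(\,\cdot\,,0)$ the functions defining $X$. The requirement that $Y$ be an integral manifold of $\It$ means every form of $\It$ restricts to zero on $Y$; decomposing the generating $(n+1)$-forms of $\It$ according to the flag $E\subset E\oplus\R\,\partial/\partial t$ and using that $E$ already annihilates the $n$-forms of $\It$, the genuinely new conditions reduce to a first-order system for the $F^a$. The content of regularity is that the symbol of this system is nondegenerate in the $t$-direction along the chosen restraining manifold, so that it may be solved for $\partial F^a/\partial t$ as analytic functions of $(x,t,F,\partial F/\partial x^i)$; this is exactly Cauchy--Kovalevskaya normal form. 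Applying that theorem with initial datum $X$ at $t=0$ produces a unique analytic graph $Y$, and its uniqueness immediately yields the final assertion that $Y\cap Y'$ is again integral, since two such solutions must agree on the overlap where both are defined.

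The step I expect to be the genuine obstacle is verifying that the $Y$ so produced is actually an integral manifold for the whole ideal $\It$, and not merely for the particular $(n+1)$-forms used to set up the Cauchy problem. Here the differential-ideal hypothesis $d\It\subset\It$ is essential. The argument I would run is the classical one: let $\theta^1,\dots,\theta^m$ be analytic generators of $\It$ and consider their pullbacks to $Y$. By construction these vanish on $E\oplus\R\,\partial/\partial t$ along $t=0$ and contract trivially with $\partial/\partial t$; differentiating along the flow of $\partial/\partial t$ and invoking $d\theta^\alpha\in\It$ together with Cartan's formula $\mathcal{L}_{\partial/\partial t}=d\,i_{\partial/\partial t}+i_{\partial/\partial t}d$, one shows that the restricted forms satisfy a linear homogeneous system of evolution equations in $t$ with vanishing initial data. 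Uniqueness for that linear system forces all the pullbacks to vanish identically, so $\It$ restricts to zero on every tangent plane of $Y$ and $Y$ is the desired integral extension.
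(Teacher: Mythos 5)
This theorem is one the paper \emph{recalls} rather than proves: it is the classical Cartan--K\"ahler theorem, stated in Section \S\ref{sec3} with an explicit deferral to \cite{BryB} and \cite{Bry}, so there is no internal proof to compare against. Your proposal is, in outline, exactly the standard proof found in those references: use the restraining manifold $Z$ to turn the extension problem into a determined analytic Cauchy problem (your transversality count $\dim(T_xZ\cap H(E))=(\dim N-r)+(n+1+r)-\dim N=n+1$ is correct and is precisely the point of the codimension-$r$ hypothesis), solve it by Cauchy--Kovalevskaya with $X$ as initial datum, and then invoke $d\It\subset\It$ to upgrade the solution from ``annihilates the forms used to set up the system'' to ``annihilates the whole ideal''; the closing remark that Cauchy--Kovalevskaya uniqueness yields the statement about $Y\cap Y'$ is also the standard one. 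You correctly isolate the genuinely delicate step. One assertion in it would need repair in a complete write-up: the pullbacks of arbitrary generators of $\It$ do \emph{not} ``contract trivially with $\partial/\partial t$ by construction'' --- that holds only for the particular forms chosen to build the Cauchy system. The classical bookkeeping is to split each pulled-back generator as $\alpha+dt\wedge\beta$ and show, using $d\It\subset\It$ and the already-solved equations, that the coefficient functions of both pieces satisfy a linear, homogeneous, analytic system in Cauchy--Kovalevskaya form with vanishing data along $t=0$; uniqueness for analytic solutions then forces them to vanish. It is also here, and in putting the original system into solved form for $\partial F^a/\partial t$, that ordinarity and the local-constancy of the polar-space dimensions (the paper's definition of regularity) are actually consumed; your gloss ``the symbol is nondegenerate in the $t$-direction'' compresses a constant-rank selection of polar equations that needs to be carried out explicitly. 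With those two points made precise, your outline is the proof in \cite{BryB}.
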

In order to verify regularity of an integral element we will need the following result, known as Cartan's test of regularity. But first we give some other definitions.\par
Let $E$ be a $n-$dimensional, integral element of $\It$. An integral flag $(E_j)_j$, of length $n$ with terminus $E$, is an increasing filtration of $n+1$ vector spaces verifying the followings:
$$E_0\subset E_1\subset\dots\subset E_n= E,\quad E_j\in V^j(\It),\quad \mathrm{dim}E_j=j\quad j=0,\dots,n.$$
Let $(E_j)_j$ be an integral flag of length $n$ and $c_j$ be the codimension of $H(E_j)$ in the appropriate tangent space, for each $j$. Call $(E_j)_j$ regular if $E_j$, $j<n$, is regular and denote by $C$ the sum over $j$ of each $c_j$. 
\begin{prop}[Cartan's test]
Let $(E_j)_j$ be a regular integral flag of length $n$.\\
Then, locally around $E_n$, $V^n(\It)$ lies in a codimension $C$ submanifold of the grassmannian $\mathrm{Gr}_n(TN)$. Moreover the flag is regular if and only if, near $E_n$, $V^n(\It)$ is a smooth manifold of codimension $C$.
\end{prop}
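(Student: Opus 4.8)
The plan is to realise $V^n(\It)$, near $E_n$, as the common zero locus of explicit functions on an affine chart of $\mathrm{Gr}_n(TN)$, and then to extract its codimension from the linear algebra of the polar spaces $H(E_j)$. First I would choose a basis $e_1,\dots,e_n$ of $E=E_n$ adapted to the flag, so that $E_j=\mathrm{Span}(e_1,\dots,e_j)$, and complete it to a basis $e_1,\dots,e_m$ of $T_xN$, with $m=\dim N$. The $n$-planes near $E$ and transverse to $\mathrm{Span}(e_{n+1},\dots,e_m)$ are graphs, with basis $v_i(p)=e_i+\sum_{a>n}p^i_a e_a$; the entries $p=(p^i_a)$ give affine coordinates on $\mathrm{Gr}_n(TN)$ centred at $E$. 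Since $\It$ is generated by finitely many forms and every form of degree $>n$ restricts trivially to an $n$-plane, $V^n(\It)$ is cut out near $E$ by the functions $F_{\delta,I}(p)=\delta\bigl(v_{i_1}(p),\dots,v_{i_k}(p)\bigr)$, with $\delta$ ranging over generators of degree $k\le n$ and $I=(i_1<\dots<i_k)$.

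The heart of the matter is the linearisation at $p=0$. As $E$ is integral each $F_{\delta,I}$ vanishes at the origin, and differentiating in $p^i_a$ amounts to replacing $v_i$ by $e_a$, so that $dF_{\delta,I}|_0$ is governed by the numbers $\delta(e_{j_1},\dots,e_a,\dots,e_{j_k})$ — precisely the functionals that define the polar spaces. Concretely, for each level $j=0,\dots,n-1$ I would select $(j{+}1)$-forms $\delta^j_1,\dots,\delta^j_{c_j}\in\It^{j+1}$ whose contractions $v\mapsto\delta^j_\beta(v,e_1,\dots,e_j)$ span the conormal of $H(E_j)$, and set $\Phi^j_\beta(p)=\delta^j_\beta\bigl(v_1(p),\dots,v_{j+1}(p)\bigr)$. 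Because $E$ is integral we have $H(E_j)\supseteq E$, so the functionals $\delta^j_\beta(\cdot,e_1,\dots,e_j)$ are supported on the directions $e_{n+1},\dots,e_m$, and the leading derivatives $\partial_{p^{j+1}_a}\Phi^j_\beta|_0=\delta^j_\beta(e_1,\dots,e_j,e_a)$ fill a rank-$c_j$ block; since $\Phi^j_\beta$ depends only on $v_1,\dots,v_{j+1}$, the level-$j$ rows vanish in the column blocks $p^{j+2},\dots,p^n$, so the whole Jacobian of the $C=\sum_j c_j$ functions $\Phi^j_\beta$ is block-triangular with full-rank diagonal blocks. Hence the $C$ differentials are independent and, by the implicit function theorem, $W=\{\Phi^j_\beta=0\}$ is a smooth codimension-$C$ submanifold near $E$; since each $\Phi^j_\beta$ is one of the defining functions, $V^n(\It)\subseteq W$ locally. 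This proves the first assertion and the bound $\mathrm{codim}_E V^n(\It)\ge C$.

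For the equivalence I would show that $V^n(\It)=W$ near $E$ exactly when the flag is regular. Under regularity each $E_j$ ($j<n$) is ordinary and the extension rank $r(E_j)=\dim H(E_j)-j-1$ is locally constant, which lets me assemble the space of integral flags as an iterated bundle whose fibre over a partial flag $(E_0,\dots,E_j)$ is an open subset of $\mathbb{P}(H(E_j)/E_j)$; a dimension count along the submersive terminus projection (which forgets the subflag and has fibres of dimension $n(n-1)/2$) then yields $\dim V^n(\It)\ge\dim\mathrm{Gr}_n(TN)-C=\dim W$. Combined with $V^n(\It)\subseteq W$ this forces $V^n(\It)=W$ near $E$, so $V^n(\It)$ is smooth of codimension $C$. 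Conversely, if $V^n(\It)$ is smooth of codimension $C$ then $\dim V^n(\It)=\dim W$ upgrades the inclusion $V^n(\It)\subseteq W$ to an equality, from which the ordinariness of the $E_j$ and the local constancy of the $c_j$ are recovered.

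I expect the main obstacle to be this last equivalence, and specifically the behaviour of the polar codimensions: $c_j$ is only upper semicontinuous as $E_j$ varies and may jump along thin subsets, so a rank-$C$ linearisation only bounds the tangent space and does not by itself produce a submanifold of the right size. Regularity is precisely the local-constancy hypothesis that forbids such jumps, makes the iterated flag bundle smooth of the expected dimension, and turns the terminus projection into a submersion. The delicate step is therefore to promote the infinitesimal statement to the genuine identification $V^n(\It)=W$, checking that the tight dimension count cannot coexist with a drop in any $\dim H(E_j)$ — which is exactly where the local constancy of the extension rank along the flag is indispensable.
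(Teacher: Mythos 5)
The paper never proves this proposition: Section \S3 explicitly recalls it as background and refers to Bryant's texts, so your attempt has to be measured against the standard argument rather than against anything in the paper. The first half of your proof \emph{is} that standard argument, and it is correct and complete: choosing $\delta^j_\beta\in\It^{j+1}$ whose polar equations at $E_j$ cut out $H(E_j)$, noting that $E\subseteq H(E_j)$ forces these functionals to be supported on $e_{n+1},\dots,e_m$, and exhibiting the block-triangular Jacobian with rank-$c_j$ diagonal blocks gives, via the implicit function theorem, a smooth codimension-$C$ submanifold $W$ with $V^n(\It)\subseteq W$ near $E_n$. (The finite generation of $\It$ you invoke at the start is neither available in general nor needed: your argument only ever uses the finitely many forms $\delta^j_\beta$.)

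The genuine gap is in the equivalence, and it sits exactly where you yourself point in the last paragraph. In the direction ``regular flag $\Rightarrow$ $V^n(\It)$ smooth of codimension $C$'' the entire weight rests on the word ``submersive'' applied to the terminus projection $\pi:\mathcal{F}_n\to W$, and this is asserted, not proved; the parenthetical justification (fibres of dimension $n(n-1)/2$ plus a dimension count) is not a valid inference, as the map $(x,y)\mapsto(x^2,y)$ shows: its fibres have the complementary dimension, yet it is not a submersion at the origin and its image contains no neighbourhood of the image point. Ruling out such a fold of $\mathcal{F}_n$ over $W$ \emph{is} the content of this half of Cartan's test. Two ways to close it: (i) show directly that every $P\in W$ near $E_n$ is integral, by induction along its adapted flag --- if $P_j$ is integral and close to $E_j$, regularity gives $\mathrm{codim}\,H(P_j)=c_j$, the polar equations of your $\delta^j_\beta$ at $P_j$ remain independent by continuity and hence cut out $H(P_j)$ exactly, so $\Phi^j_\beta(P)=0$ forces $v_{j+1}(p)\in H(P_j)$, i.e.\ $P_{j+1}$ is integral; or (ii) prove submersivity by computing $\ker d\pi$: a deformation of the flag whose terminus is fixed to first order keeps the base point fixed and moves each $E_j$ inside $E_n$, so $\ker d\pi$ is precisely the tangent space of the flag-variety fibre, and since $\mathrm{im}\,\pi\subseteq V^n(\It)\subseteq W$ the dimension count then makes $d\pi$ surjective onto $T_{E_n}W$. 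Either argument must actually appear; neither does. The converse direction is in worse shape: ``from which the ordinariness of the $E_j$ and the local constancy of the $c_j$ are recovered'' is a claim, not a proof, and this direction needs its own induction on the flag combined with semicontinuity. On that point, note also that the semicontinuity you cite is backwards: $c_j$, being the rank of the polar system, is \emph{lower} semicontinuous (equivalently $\dim H$ is upper semicontinuous), so the danger regularity must exclude is that nearby integral elements have \emph{larger} polar codimension than $E_j$, leaving your chosen forms unable to cut out their polar spaces.
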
\par\medskip
Let $N$ be a seven dimensional manifold and suppose its frame bundle\footnote{The action of $a\in\GL$ on $\xi\in\Ft$ is given by $\xi.a=\xi\circ a$.} $\Ft\xrightarrow{P}N$, where 
$$\Ft=\left\{\xi_x\;|\;\xi_x:\R^7\rightarrow T_{x}N,\;x\in N\right\},$$
can be reduced to a principal $\G-$bundle $\Pt$. This is equivalent to the existence of a global section of the fiber bundle $\Ft/\G\xrightarrow{f} N$ defined by the diagram
\begin{equation*}
\begin{tikzcd}
\Ft\arrow{r}{Q}\arrow{dr}{P}&\Ft/\G\arrow{d}{f}\\
&N\\
\end{tikzcd}
\end{equation*}
 or, similarly, to the existence of a stable and positive (locally given by \eqref{stdcoo}), three-form $\varphi$.\par
Let us denote the total space $\Ft/\G$ by $S$. Following Bryant (see \cite{Bry}) we introduce differential ideals $\It$'s on $\Ft$ and $S$ (labelled by the same letter), related by the projection $Q$, as follows. Let $\Lambda^*(\R^7)^{\G}$ be the ring of $\G-$invariant, constant coefficients, differential forms on $\R^7$, and, for each $\delta_0\in\Lambda^*(\R^7)^{\G}$, consider
$$\tilde{\delta}\in\Omega^*(\Ft),\quad\text{where}\quad \delta_0\left(\xi^*\left(P_*(.)\right)\right)=\tilde{\delta}_{\xi}(.),\quad\forall \xi\in\Ft.$$
Now define
$$\It=<\left\{ d(\tilde{\delta})\;|\;\delta_0\in\Lambda^*(\R^7)^{\G}\right\}>.$$ \par
The role of $\It$ in the study of $\G-$structures is well explained by the following theorem. Recall that a $\G$-structure is said to be \emph{torsion-free} if the Levi-Civita connection restricts to $\Pt$.\footnote{Other equivalent conditions are: $\varphi$ is parallel; $\varphi$ is both closed and co-closed}.  
\begin{thm}
Let $V^7(\It,f)$ be the set of seven dimensional integral elements of $\It$ which are transversal to the fibers of $f:S\rightarrow N$. Then $V^7(\It,f)$ consists of tangent spaces to graphs of local sections corresponding to torsion-free structures. 
\end{thm}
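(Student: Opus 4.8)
The plan is to identify the algebraic generators of $\It$ in terms of the tautological forms on $S$, reduce the integral-element condition to the closedness of the structure forms, and then invoke the standard characterization of torsion-free $\G$-structures. First I would determine the ring $\Lambda^*(\R^7)^{\G}$ of $\G$-invariant, constant-coefficient forms. Because $\G$ acts irreducibly on $\R^7$ and fixes $\varphi$, its Hodge dual $\phi=*\varphi$, and the volume form, the only nonzero invariants occur in degrees $0,3,4,7$ and are spanned by $1,\varphi,\phi,\mathrm{vol}$. Hence $\It$ is generated, as a differential ideal, by the exact forms $d\tilde\varphi$ (degree $4$), $d\tilde\phi$ (degree $5$) and $d\widetilde{\mathrm{vol}}$ (degree $8$). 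The last is an $8$-form, so it restricts to zero on every seven-dimensional subspace and plays no role in $V^7(\It)$.

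Next I would unwind the transversality hypothesis. An element $E\in V^7(\It,f)$ is a seven-dimensional subspace of some $T_pS$ on which $f_*$ restricts to an isomorphism onto $T_{f(p)}N$; therefore $E$ is the tangent space at $p$ to the graph of a unique local section $\sigma$ of $f$, that is, of a local $\G$-structure. Under the identification of the graph with $N$, the tautological forms $\tilde\varphi$ and $\tilde\phi$ restrict to the structure forms $\sigma^*\tilde\varphi$ and $\sigma^*\tilde\phi$ on $N$, so that $(d\tilde\varphi)|_E$ computes $d(\sigma^*\tilde\varphi)$ at $p$, and similarly for $\phi$.

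The heart of the argument, and the step I expect to be the main obstacle, is the reduction of the full integral condition to the vanishing of the two generators. A degree-$7$ element of $\It$ has the form $\theta_3\wedge d\tilde\varphi+\theta_2\wedge d\tilde\phi$ with $\theta_3\in\Omega^3(S)$ and $\theta_2\in\Omega^2(S)$ arbitrary. Since the restriction maps $\Lambda^3 T_p^*S\to\Lambda^3 E^*$ and $\Lambda^2 T_p^*S\to\Lambda^2 E^*$ are surjective, and since the wedge pairings $\Lambda^3 E^*\times\Lambda^4 E^*\to\Lambda^7 E^*$ and $\Lambda^2 E^*\times\Lambda^5 E^*\to\Lambda^7 E^*$ are nondegenerate, requiring all such $7$-forms to vanish on $E$ forces $(d\tilde\varphi)|_E=0$ and $(d\tilde\phi)|_E=0$ separately. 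The two generators decouple precisely because they have different degrees, and the nondegeneracy of these pairings — which is where $\dim N=7$ enters decisively — is the key linear-algebra point to verify carefully.

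Finally I would translate back. The condition $(d\tilde\varphi)|_E=0$ at the tangent space to the graph means $d(\sigma^*\tilde\varphi)=0$, so the structure $3$-form is closed, while $(d\tilde\phi)|_E=0$ means the structure $4$-form $\sigma^*\tilde\phi$ is closed. By the characterization recalled in the footnote, the simultaneous closedness of $\varphi$ and $\phi=*\varphi$ (equivalently, $\varphi$ closed and co-closed) is exactly torsion-freeness, and the converse direction follows from the same identities applied to any torsion-free section. This establishes that $V^7(\It,f)$ consists precisely of the tangent spaces to graphs of local sections corresponding to torsion-free $\G$-structures.
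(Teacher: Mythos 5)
The paper itself offers no proof of this theorem --- it is recalled from Bryant \cite{Bry} at the start of \S\ref{sec3} --- so your attempt can only be measured against the standard argument. Your algebraic half is correct and is indeed the standard route: the $\G$-invariants in $\Lambda^*(\R^7)^{\G}$ live in degrees $0,3,4,7$; since the generators $d\tilde\varphi$, $d\tilde\phi$, $d\widetilde{\mathrm{vol}}$ are closed, the differential ideal is the algebraic one; the $8$-form is irrelevant in degree $7$; and the perfect pairings $\Lambda^kE^*\times\Lambda^{7-k}E^*\to\Lambda^7E^*$ force $(d\tilde\varphi)|_E=0$ and $(d\tilde\phi)|_E=0$ separately. (Two minor slips: the decoupling has nothing to do with the generators having different degrees --- one simply takes $\theta_2=0$ or $\theta_3=0$, since each summand already lies in $\It$; and the local section tangent to $E$ is not unique, only its $1$-jet at the base point is.)

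The genuine gap is in your final paragraph. For a single integral element $E\subset T_pS$, the condition $(d\tilde\varphi)|_E=0$ yields $d(\sigma^*\tilde\varphi)=0$ only \emph{at the point} $x=f(p)$, for any section $\sigma$ whose graph is tangent to $E$ there; it does not make the structure form closed on a neighbourhood, and a generic section tangent to $E$ is certainly not torsion-free. What your argument actually proves is that $V^7(\It,f)$ equals the set of tangent planes of graphs of sections whose torsion vanishes at the point of tangency. The theorem, however, concerns integral \emph{elements}, so the inclusion of $V^7(\It,f)$ into the set of tangent spaces of graphs of genuinely torsion-free sections requires producing, through each such $E$, an honest torsion-free structure with the prescribed $1$-jet data: this is Bryant's local existence theorem, and it is exactly where involutivity of $\It$ (Cartan's test, with the polar codimensions $(0,0,0,1,5,15,28,35)$ that the paper computes inside its proof of Theorem \ref{thm}) and the Cartan--K\"ahler theorem in the analytic category enter. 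None of that machinery appears in your proposal, and it cannot be replaced by the pointwise linear algebra. If instead the statement is read as characterizing transversal integral \emph{manifolds} (graphs all of whose tangent spaces are integral elements), then your computation does suffice, since the pointwise conditions then hold at every point of the graph; but that weaker reading is not what $V^7(\It,f)$ denotes.
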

\begin{rem}\label{rem1}
Let $F$ be the projection $\Ft\rightarrow N$. Then $$\mathrm{codim}(V^7(\It,F),\mathrm{Gr}_7(T\Ft))=\mathrm{codim}(V^7(\It,f),\mathrm{Gr}_7(TS)).$$
\end{rem}

\section{Existence results}\label{sec4}
In this section we prove existence of closed, connected, three-submanifolds of (open) $\G-$manifolds, modelled on $\varphi-$planes.\par\medskip
\begin{defn}
A submanifold $X$ of an almost $\G-$manifold $(N,\varphi)$ is said to be \emph{$\varphi-$}manifold if all its tangent spaces are $\varphi-$planes or, equivalently, if the pullback of $\varphi$ to $X$ vanishes.   
\end{defn}
\begin{rem}
Any submanifold $X$ of a co-associative one, say $Y$, is, by definition, $\varphi-$manifold. Moreover, since  for any $\varphi-$plane there is a unique direction defining a co-associative extension, such $X$ defines $TY|_X$.
\end{rem}

Before proving the main theorem of this section we need the following lemma.
\begin{lemma}\label{id}
Let $X$ be a $\varphi-$manifold of some, connected, almost $\G$ manifold $(N,\varphi)$. Assume that there exist an isometry $\tau$ of $N$ and a point $x\in X$ such that $$\tau(x)=x,\quad (\tau^*)\varphi_x=\varphi_x,\quad \tau^*|_{T_x^*X}=\mathrm{Id}_{T_x^*X}.$$ Then $\tau=\mathrm{Id}_N$. In particular if $\tau_1$ and $\tau_2$ are isometries of $N$ preserving the structure, and they agree on some open set of $X$, then $\tau_1=\tau_2$.
\end{lemma}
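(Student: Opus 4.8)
The plan is to show that an isometry $\tau$ fixing a point $x$, preserving the $\G$-structure three-form at that point, and restricting to the identity on $T_x^*X$ must in fact be the identity on the whole tangent space $T_xN$, after which a standard rigidity argument for isometries of connected Riemannian manifolds finishes the proof. The key observation is that the hypotheses pin down the differential $d\tau_x$ completely: it acts as the identity on $T_xX$ (dual to the condition on $T_x^*X$), and the condition $(\tau^*)\varphi_x=\varphi_x$ together with the equivariant isomorphism of Proposition \ref{rapp} forces $d\tau_x$ to be the identity on the normal space $T_xX^\perp$ as well.

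First I would reduce everything to linear algebra at the single point $x$. Since $\tau$ is an isometry, $d\tau_x\in\SO(T_xN)$; since $\tau$ preserves $\varphi_x$, in fact $d\tau_x$ lies in the copy of $\G$ stabilising $\varphi_x$. The condition $\tau^*|_{T_x^*X}=\mathrm{Id}$ dualises to $d\tau_x|_{T_xX}=\mathrm{Id}_{T_xX}$, so $d\tau_x$ fixes the $\varphi$-plane $\xi:=T_xX$ pointwise. Because $d\tau_x$ is orthogonal, it preserves $\xi^\perp$, so it reduces to an element $A\in\SO(\xi^\perp)$ that, together with the identity on $\xi$, must preserve $\varphi_x$. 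The task is then to prove that the only such $A$ is the identity.

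The crux of the argument is this normal-space computation, and here I would invoke Proposition \ref{rapp}. Working in Cayley coordinates with $\xi=\frac{\partial}{\partial x^1}\wedge\frac{\partial}{\partial x^2}\wedge\frac{\partial}{\partial x^3}$, the stabiliser in $\G$ of $\xi$ (as an oriented plane) is $K\cong\SO(3)$, acting on $\xi^\perp=\mu\oplus\lambda$ via the representation identified in Proposition \ref{rapp} with $\Lambda^3\xi^*\oplus\Lambda^2\xi^*$. An element that is moreover the \emph{identity} on $\xi$ corresponds, under $f_1$, to the identity in $\mathrm{GL}(\xi)$, which forces the $\SO(3)$-parameter to be trivial: concretely, the induced action on $\Lambda^3\xi^*$ is by $\det$ and on $\Lambda^2\xi^*\cong\xi$ is the given $\SO(3)$-action, so acting trivially on $\xi$ forces $A=\mathrm{Id}$ on $\lambda$ and hence on $\mu$ as well. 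Thus $d\tau_x=\mathrm{Id}_{T_xN}$. I expect this step — tracking exactly how the constraint ``identity on $\xi$'' propagates through the equivariant isomorphism to kill all of $A$ — to be the main obstacle, since it is where the special geometry of $\varphi$-planes (as opposed to a generic three-plane) is actually used.

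Finally, I would conclude by the classical fact that an isometry of a connected Riemannian manifold is determined by its value and differential at a single point: two isometries agreeing to first order at $x$ agree on a neighbourhood (they preserve the exponential map), and the set where they agree to first order is both open and closed, hence all of $N$ by connectedness. Applying this to $\tau$ and $\mathrm{Id}_N$ gives $\tau=\mathrm{Id}_N$. The ``in particular'' statement then follows immediately: if $\tau_1,\tau_2$ are structure-preserving isometries agreeing on an open subset of $X$, then $\tau:=\tau_2^{-1}\circ\tau_1$ is a structure-preserving isometry satisfying all three hypotheses at any point of that open set, so $\tau=\mathrm{Id}_N$, i.e. $\tau_1=\tau_2$.
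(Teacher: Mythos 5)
Your proposal is correct and follows essentially the same route as the paper: linearise at the fixed point, use the fact that $d\tau_x$ lies in the stabiliser of the $\varphi$-plane $T_xX$ inside $\G$ (which the paper writes explicitly in Cayley coordinates as $\mathrm{diag}(\pm t,\pm 1,t)$, $t\in\SO(3)$, and you instead deduce via the equivariant isomorphism of Proposition \ref{rapp}) to conclude $d\tau_x=\mathrm{Id}_{T_xN}$, and then invoke the standard rigidity of isometries of connected Riemannian manifolds. The only cosmetic difference is that the paper reads off the triviality of the normal action from the explicit matrix rather than from the abstract equivariance argument, and your reduction of the ``in particular'' statement to $\tau=\tau_2^{-1}\circ\tau_1$ is exactly what the paper leaves implicit.
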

\begin{proof}
Consider Cayley coordinates around $x$. Since $\tau^*_x$ preserves $\varphi_x$ its matrix representation lies in $\G$; explicitly
$$\tau^*_x=\left(\begin{matrix}
\pm t&0&0\\
0&\pm 1&0\\
0&0&t
\end{matrix} \right),\quad\text{for some}\quad t\in\SO(3).$$
But $\pm t$ represents $\tau^*_x|_{T_x^*X}$, which is the identity, hence $\tau^*_x=\mathrm{Id}_{T_xN}$. Since an isometry fixing $x$ and acting identically on $T_xN$ must be $\mathrm{Id}_N$ ($N$ is connected), it follows $\tau=\mathrm{Id}_N$.  
\end{proof}
\begin{rem}
Recall that, as stressed by Bryant in \cite{Bry}, a closed, orientable, analytic, Riemannian three-manifold always admits an analytic parallelization. This follows since every orientable three-manifold is parallelizable (see \cite{WU}) and analytic differential forms on closed, analytic manifolds, are dense in the space of smooth differential forms (see \cite{Bo}).
\end{rem}
\begin{thm}\label{thm}
Let $X$ be a closed, connected, orientable, analytic, Riemannian, three-manifold. Then $X$ can be isometrically embedded into an open, analytic, $\G-$manifold $N$ as $\varphi-$manifold contained in a co-associative one, the last isometric to $X\times S^1$. Moreover if there exists an analytic, non trivial, involutive isometry  $\tau\in \mathrm{Iso}(X)$ and an orthonormal co-frame $(\alpha_1,\alpha_2,\alpha_3)$ such that $\tau^*\alpha_j=\alpha_j$, if $j=1,2$, and $\tau^*\alpha_3=-\alpha_3$, then $\tau$ can be extended to an unique involutive isometry, which preserves the structure, on whole $N$. 
\end{thm}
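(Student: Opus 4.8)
The plan is to prove the two assertions of the theorem separately, using the Cartan--K\"ahler machinery of Section \ref{sec3} for existence and Lemma \ref{id} for the uniqueness of the extended isometry. The underlying geometric principle, highlighted in the remark following the definition of $\varphi-$manifold, is that a $\varphi-$plane sits inside a \emph{unique} co-associative extension; hence the natural strategy is to first build a co-associative four-manifold containing $X$ and only afterwards observe that $X$ itself, being three-dimensional, is automatically a $\varphi-$manifold inside it.

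For the existence part I would proceed as follows. First, invoke the remark that $X$, being closed, orientable, and analytic, admits an analytic parallelization; equivalently an analytic orthonormal co-frame $(\alpha_1,\alpha_2,\alpha_3)$. On the product $X\times\R$ (or $X\times S^1$) with coordinate $t$ I would write down an explicit candidate $\G-$structure, modelling the normal direction $dt$ on the $\mu$-factor $\frac{\partial}{\partial r^0}$ of the decomposition $\xi^\perp=\mu\oplus\lambda$ from Section \ref{sec2}, and the $\alpha_j$ on the base $\xi=\frac{\partial}{\partial x^1}\wedge\frac{\partial}{\partial x^2}\wedge\frac{\partial}{\partial x^3}$. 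The point is to apply the Cartan--K\"ahler Theorem to the ideal $\It$ on the frame bundle $\Ft$ (or on $S=\Ft/\G$) with $X$ as the initial three-dimensional integral manifold. Concretely I would verify that $TX$, suitably lifted, is a regular integral element by checking Cartan's test: compute the polar spaces $H(E_j)$ of an integral flag terminating at the lifted $TX$, confirm the flag is regular with extension rank $r\geq 0$, and produce the codimension-$r$ transversal $Z$ required by the theorem. Cartan--K\"ahler then furnishes an analytic four-dimensional integral manifold $Y$ containing $X$, which by construction carries a torsion-free $\G-$structure restricting to a co-associative $Y$ with $Y\cong X\times S^1$; that $X\subset Y$ is a $\varphi-$manifold is immediate since $\varphi|_X$ vanishes (its tangent planes are $\varphi-$planes by the choice of model). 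The principal obstacle here is the regularity/involutivity computation: verifying Cartan's test and pinning down the extension rank $r$ for this specific ideal is where the real work lies, since one must show the relevant tableau is involutive so that the hypotheses of Cartan--K\"ahler are genuinely met.

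For the second, equivariant part, suppose $\tau\in\mathrm{Iso}(X)$ is an analytic, non-trivial, involutive isometry with $\tau^*\alpha_j=\alpha_j$ for $j=1,2$ and $\tau^*\alpha_3=-\alpha_3$. The natural extension is to let $\tilde\tau$ act on $X\times\R$ by $\tilde\tau(x,t)=(\tau(x),-t)$, reflecting the normal coordinate. I would check that this $\tilde\tau$ preserves the candidate $\varphi$: since $\tau^*$ reverses exactly one base co-frame direction and the reflection reverses the $\mu$-factor, the sign pattern is precisely the one realised by the second generator of the $\Z_2$ displayed in Section \ref{sec2}, namely $\mathrm{diag}(-1_{\SO(3)},-1,1_{\SO(3)})$ acting in Cayley coordinates, which lies in $H=K\times\Z_2\subset\G$; hence $\tilde\tau^*\varphi=\varphi$ pointwise along $X$, and by analyticity on all of $Y$. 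Thus $\tilde\tau$ is a structure-preserving involutive isometry extending $\tau$.

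Uniqueness is then an immediate application of Lemma \ref{id}: if $\tau_1$ and $\tau_2$ are two such structure-preserving isometric extensions of $\tau$, they agree on the open subset $X$ of $N$, so by the last sentence of Lemma \ref{id} we get $\tau_1=\tau_2$ on all of $N$. I expect this uniqueness step to be routine once the existence and the sign bookkeeping are in place; the genuinely delicate point throughout remains the Cartan's-test verification, together with the care needed to ensure that the analytic co-frame and the reflection $t\mapsto -t$ really do assemble into an \emph{isometry} of the constructed metric rather than merely a diffeomorphism preserving $\varphi$ formally.
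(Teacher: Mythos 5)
Your overall outline (Cartan--K\"ahler for existence, Lemma \ref{id} for uniqueness) points in the right direction, but the existence argument as you describe it cannot work, for a structural reason. The ideal $\It$ of Section \ref{sec3} lives on the frame bundle $\Ft$ (or on $S=\Ft/\G$) of a \emph{seven}-dimensional manifold, and its integral manifolds transverse to the fibers are partial graphs of sections of $S\to N$: only the \emph{seven}-dimensional ones correspond to torsion-free $\G-$structures. A four-dimensional integral manifold of $\It$ is therefore not ``a co-associative submanifold carrying a torsion-free structure''; it carries no ambient structure at all and is only initial data for further extensions. Your plan --- lift $X$ to a three-dimensional integral manifold, apply Cartan--K\"ahler once, and read off a co-associative $Y\cong X\times S^1$ --- conflates these two roles (note also that $X\times\R$ is four-dimensional, so it cannot support the ``candidate $\G-$structure'' you propose to write on it). The paper's proof instead takes the seven-manifold $M=X\times S^1\times\R^3$, uses the explicit product form $\tilde\varphi$ only to define the lift, starts from the \emph{four}-dimensional integral manifold $X_4$ (the lift of $X\times S^1$), and applies Cartan--K\"ahler \emph{three} times (dimensions $4\to5\to6\to7$), each time exhibiting a transversal $Z_k$ of the correct codimension and verifying regularity via the filtration $\hg_k$ and the codimension computation $(c_0,\dots,c_7)=(0,0,0,1,5,15,28,35)$. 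Only the final seven-dimensional integral manifold, being a graph over an open $N\subset M$, yields the torsion-free structure in which $X\times S^1$ is co-associative and $X$ is a $\varphi-$manifold. So the ``real work'' is not a single application of Cartan's test but setting up the correct ambient and iterating.

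The equivariant part has a second genuine gap. You extend $\tau$ by a reflection and argue that $\tilde\tau^*\varphi=\varphi$ holds ``pointwise along $X$, and by analyticity on all of $Y$''. Analyticity does not propagate an identity from a positive-codimension submanifold to a neighbourhood (two analytic tensors can agree along $X$ and differ off it), and the torsion-free structure produced by Cartan--K\"ahler is not the explicit $\tilde\varphi$, so there is no a priori reason it is invariant under your extension. The paper forces invariance into the construction: the element $t=\mathrm{diag}(f,\det f,\det(f)f)\in\G$ determines the lifted involution $[\underline{\tau}_*]$ on $S$; the transversals $Z_k$ are built from $\mathrm{Ad}(t)-$invariant complements $W_k=\hg_k^{\perp}$ and invariant neighbourhoods $U_k$, so each $Z_k$ is $[\underline{\tau}_*]-$invariant; and after each application of Cartan--K\"ahler the integral manifold $Y_k$ is replaced by $Y_k\cap[\underline{\tau}_*]Y_k$, which the uniqueness clause of the Cartan--K\"ahler theorem guarantees is still an integral manifold of the right dimension. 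This step-by-step symmetrisation is what makes the restriction of $\underline{\tau}$ to $N$ a structure-preserving isometry extending $\tau$. Your final appeal to Lemma \ref{id} for uniqueness is correct, though note that $X$ is not open in $N$: the lemma requires agreement on an open subset \emph{of} $X$, which is what you actually have.
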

\begin{proof}
Let $\tau$ be as in the second part of the statement, otherwise put $\tau=\mathrm{Id}_X$. Let $\eta=(\alpha_1,\alpha_2,\alpha_3)$ be an orthonormal global co-frame and $f\in\mathrm{O}(3)$ defined by $$f.\eta =(\tau^*\eta).$$
Consider the section $\xi$ of the the frame bundle $\Ft$, over $M=X\times S^1\times \R^3$, defined by the co-frame $(\alpha_1,\alpha_2,\alpha_3,dr^0,dr^1,dr^2,dr^3)$,  where $r^0$ is the angle coordinate on $S^1$ and $(r^1,r^2,r^3)$ are coordinates on $\R^3$. Now define $\underline{\tau}\in\mathrm{Diff}(M)$ as 
$$\underline{\tau}(x,r^0,r^1,r^2,r^3)=(\tau(x),\mathrm{det}(f)r^0,(\mathrm{det}(f)f).(r^1,r^2,r^3)),\quad\forall (x,\underline{r})\in M.$$
Thanks to the following identification of principal bundles
\begin{equation*}
\begin{CD}
M\times\GL@>>>\Ft,\\
(x,\underline{r};a)@>>>\xi_{(x,\underline{r})}.a.
\end{CD}
\end{equation*}
the action of $\underline{\tau}$ on $\Ft$ is given by
$$\underline{\tau}_*(x,\underline{r};a)=(\underline{\tau}(x,\underline{r});ta),\quad\forall (x,\underline{r})\in M,$$
where $t=t^{-1}\in\G$ is 
$$
t=\left(
\begin{matrix}
f&0&0\\
0&\mathrm{det}(f)&0\\
0&0&\mathrm{det}(f)f\\
\end{matrix}
\right).
$$
In fact, if $m=(x,\underline{r})\in M$ and $a\in\GL$, the following diagram turns out to be commutative
\begin{equation*}
\begin{tikzcd}
\R^7\arrow{rr}{a}&&\R^7\arrow{rr}{\xi_{m}}\arrow{dd}{t_m}\arrow{ddrr}{(\tau_*\xi)_{\tau(m)}}&&T_{m}M\arrow{dd}{\tau_*}\\
&&&&\\
&&\R^7\arrow{rr}{\xi_{\tau(m)}}&&T_{\tau(m)}M,
\end{tikzcd}
\end{equation*}
hence $\tau_*(\xi_m .a)=\xi_{\tau(m)}.t_m a$.\\
Let $S$ be the total space of the $\G-$structure bundle associated to $\Ft$ defined by
$$Q:\Ft\longrightarrow \Ft/\G=S.$$
The map $\underline{\tau}_*$ descends to a well defined diffeomorphism $[\underline{\tau}_*]$ of $S$, given by\footnote{In the sequel square brackets denote points in $S$ as follows $\left[x,\underline{r};a\right]=\left\{(x,\underline{r};ab)\;|\;b\in\G\right\}$.}
$$[\underline{\tau}_*][x,\underline{r};a]=[\underline{\tau}(x,\underline{r});ta].$$
Now, the form $\tilde{\varphi}\in \Omega^3(M)$
$$\tilde{\varphi}=(dr^0\wedge \alpha_1+\alpha_2\wedge\alpha_3)dr^1+(dr^0\wedge \alpha_2-\alpha_1\wedge\alpha_3)dr^2+(dr^0\wedge \alpha_3+\alpha_1\wedge\alpha_2)dr^3-dr^1\wedge dr^2\wedge dr^3,$$
is stable, positive, $\underline{\tau}-$invariant and related to $\sigma\in\Gamma(M,S)$
$$\sigma(x,\underline{r})=[x,\underline{r};1],\quad \forall (x,\underline{r})\in M.$$
Let $\underline{\varphi}$ be its pullback on $\Ft$.
\par
Before proceeding further fix $(x,\underline{r})\in M$. Observe that $f$ leaves unchanged a complete flag of $\R^3$
$$\left\{0\right\}=L_0\subset L_1\subset L_2\subset L_3=\R^3.$$
Then we may consider the complete flag $(F_k)_k$ of $\R^7$ given by
$$\left\{0\right\}=\R\subset \R^2\subset\R^3\subset \R^4\subset\R^4\oplus L_1\subset  \R^4\oplus L_2\subset \R^4\oplus L_3=\R^7.$$
Consider a seven dimensional integral element $E_7\subset T\Ft$ of the ideal $\It$, intrudced in \S\ref{sec3}, transverse to the fiber over $(x,\underline{r})$. If $\theta=(\theta_k)_k$ represents the tautological one-form on $\Ft$ with respect to $\xi$\footnote{Explicitly $\theta_k(\xi^h)=\delta_k^h$.}, then $E_7$ is the terminus of the complete integral flag $(E_k)_k$ given by
\begin{equation*}
\begin{cases}
E_k=\left\{e\in E_7\,|\,\theta_j(e)=0,\;k<j \right\},&\text{if}\quad 0\leq k\leq 4,\\
E_5=\left\{e\in E_7\,|\,\theta_5(e)+\theta_6(e)+\theta_7(e)\in L_1\right\},\\
E_6=\left\{e\in E_7\,|\,\theta_5(e)+\theta_6(e)+\theta_7(e)\in L_2\right\}.
\end{cases}
\end{equation*}\par
In order to compute the polar spaces of $E_k$ identify $\R^k$ with $F_k$, define, for $0\leq k\leq 7$, $\iota_k:\R^k\rightarrow\R^7$, and consider the decreasing filtration $(\hg_k)_k$ of vector spaces given by 
$$\mathfrak{\hg}_k=\left\{A\in\gl\;|\;\iota_k^*(A^*\delta)=0\quad\forall \delta\in{\left(\Lambda^* \R^7\right)}^{\G} \right\}.$$
Observe that, if $c_k=\mathrm{codim}(\hg_k,\gl)$, it turns out that
$$(c_0,\dots,c_7)=(0,0,0,1,5,15,28,35).$$ 
In fact the computation is straightforward for $k<5$, and, if $k\geq 5$, there exists a $\G-$isometry turning $F_k$ into any given $k-$plane: if we choose one of them a computation shows the claim.\par
Then, identifying $\GL$ with the fiber, it turns out 
$$H(E_k)=\hg_k+E_7.$$
Thus, by Cartan's test, $(E_k)_k$ is a regular integral flag of $\It$. Moreover, by Remark \ref{rem1}, also $([E_k])_k$ is an integral regular flag of $\It$ on $S$, still transverse to the fiber.\par
For future reference observe that, since $\iota_k(\R^k)$, for $k\geq 4$, is a $t-$module, $t$ acts on $\R^k$ and 
$$\iota_k^*t^*=t^*\iota_k^*,\quad\text{on}\quad \Lambda^*\R^7.$$
As consequence, if $k\geq 4$, $\hg_k$ turns out to be $\mathrm{Ad}(t)-$invariant:
$$\iota_k^*((tAt)^*\delta)=\iota_k^*(t^{*}A^*t^*\delta)=t ^{*}\iota_k^*(A^*\delta)=0,\quad\forall A\in\hg_k,\delta\in{\left(\Lambda^* \R^7\right)}^{\G}.$$
By hypotheses we can equip $\gl$ with an $\mathrm{Ad}(t)-$invariant metric and define the increasing filtration $(W_k)_k$ of invariant subspaces given by 
$$W_k=\hg_k^{\perp},\quad k\geq 4.$$
Now, for each $k\geq 4$, let $U_k$ be an $\mathrm{Ad}(t)-$invariant open neighbourhood of $0\in W_k$ such that the map
\begin{equation*}\begin{CD}
U_k\times\G@>>>\GL,\\
(u,g)@>>> e^ug,
\end{CD}
\end{equation*}
is an embedding. It exists since $W_k$ does not intersect $\hg_k$, which contains $\gt$. With no loss of generality we may suppose $U_k\subset U_{k+1}$.\par
Finally we are ready to apply the Cartan-K\"aheler Theorem to produce integral manifolds of $\It$.\\
First, define $X_4$ as 
$$X_4=\left\{[x,r^0,\underline{0};1]\;|\;x\in X,\;r^0\in\R\right\rbrace.$$
Obviously $X_4$ is $[\underline{\tau}_*]-$invariant. Moreover it is a $4-$dimensional integral manifold of $\It$ (since $(Q_*\underline{\varphi})|_{X_4}=0$ and $d(Q_*\underline{\phi})|_{X_4}=0$) whose tangent spaces are all regular elements of type $[E_4]$ with respect to a regular flag introduced before. Consequently $X_4$ has extension rank $r(X_4)=32$.\\
Now consider the $10-$dimensional manifold
$$Z_4=\left\lbrace [x,r^0,s;e^u]\;|\;x\in X,\;r^0\in\R,\;s\in L_1,\;u\in U^4\right\rbrace.$$
$Z_4$ is $[\underline{\tau}_*]-$invariant, in fact 
\begin{align*}
[\underline{\tau}_*]\left[x,{r^0},s;e^u\right]=&\left[\underline{\tau}(x,{r^0},s);te^u \right]\\
=&\left[\underline{\tau}(x,{r^0},s);e^vt\right],\\
=&\left[\underline{\tau}(x,r^0,s);e^v\right]\\
&\text{for some $v\in U^4$}.
\end{align*}
Moreover its tangent spaces are transversal to the polar spaces of type $H([E_4])$, and, by the Cartan-K\"ahler theorem, there exists a $5-$dimensional integral manifold $Y_4$ of $\It$, verifying 
$$X_4\subset Y_4\subset Z_4.$$
By invariance of $Z_4$ and uniqueness also $X_5=Y_4\cap [\underline{\tau_*}]Y_4$ is a $5-$dimensional integral manifold of $\It$ with the same property.\\
 Replacing $X_5$ with a neighbourhood of $X_4$ if necessary we may assume that $X_5$ is connected, $[\underline{\tau_*}]-$invariant, and the graph of a section over an open neighbourhood of $\left\{(x,r^0,0,0,0)\right\}$ in $\left\{(x,r^0,s)\,|\,s\in L_1\right\}$. Since it is a graph, its tangent spaces are regular of type $E_5$.\par
Now it would be clear what strategy we are following. The rest of the proof will proceed as we have just seen.\par
Define the $21-$dimensional manifold $Z_5$ as follows
$$Z_5=\left\lbrace [x,{r^0},s;e^u]\;|\;x\in X,\;r^0,\in\R,\;s\in L_2,\;u\in U^5\right\rbrace.$$
It is $[\underline{\tau_*}]-$invariant, meets the polar spaces of $X_5$ transversally, and its codimension equals the extension rank of $X_5$.\\
Thus there exists a $6-$dimensional integral manifold $Y_5$ of $\It$ satisfying 
$$X_5\subset Y_5\subset Z_5.$$
Defining $X_6$ to be $Y_5\cap [\underline{\tau_*}] Y_5$, and replacing it with a suitable neighbourhood of $X_5$, we may assume that $X_6$ is a connected graph of a section over an open neighbourhood of $\left\{(x,{r^0},s)\,|\,s\in L_1\right\}$ in $\left\{(x,{ r^0},s)\,|\,s\in L_2\right\}$, hence also regular.\par
Define the $35-$dimensional manifold $Z_6$ as follows
$$Z_6=\left\lbrace [x, r^0,\underline{r};e^u]\;|\;x\in X,\;r^0,r^1,r^2,r^3\in\R,\;u\in U^6\right\rbrace.$$
It is $[\underline{\tau_*}]-$invariant, meets the polar spaces of $X_6$ transversally, and its codimension equals the extension rank of $X_6$.\\
Thus there exists a $7-$dimensional integral manifold $Y_6$ of $\It$ satisfying 
$$X_6\subset Y_6\subset Z_6.$$
Finally defining $Y$ as $Y_6\cap [\underline{\tau_*}]Y_6$, and replacing it with a suitable neighbourhood of $X_6$, we may assume that $Y$ is a connected graph of a section, say $\varsigma$, over an open ($\underline{\tau}-$invariant) neighbourhood $N$ of $\left\{(x, r^0,s)\,|\,s\in L_2\right\}$ in $M$. Such $\varsigma$ defines a torsion-free $\G-$structure on $N$, which agrees with that one induced by ${\varphi}$ on $X$. Hence $X$ turns out to be a $\varphi-$manifolds contained in the, compact, co-associative, $X\times S^1$. Finally the restriction of $\underline{\tau}$ is the unique isometry, by Lemma \ref{id}, which extends $\tau$. 
\end{proof}
The previous theorem allows us to prove the following corollary, on existence of non-orientable $\varphi-$manifolds.
\begin{corol}\label{cor1}
Let $X$ be a closed, connected, non orientable, analytic, Riemannian three-manifold, and $\pi:X'\rightarrow X$ its Riemannian orientation covering. Suppose there exist two orthonormal one-forms, $a_1$ and $a_2$, on $X$. Then there exist two open $\G-$manifolds $N'$ and $N$ containing $X'$ and $X$ as $\varphi-$manifolds respectively, and a two-fold covering $\underline{\pi}:N'\rightarrow N$ preserving the structures and extending $\pi$. Moreover $X'$ and $X$ are contained in $\underline{\pi}$-related, closed, co-associative submanifolds $Y'$, $Y$, isometric to $X'\times S^1$ and $(X'\times S^1)/\Z_2$ respectively. In particular $X$ defines a non trivial class of $H^1(Y,\Z_2)$ (see \cite{BW}). 
\end{corol}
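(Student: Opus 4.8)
The plan is to reduce the non-orientable statement to the orientable Theorem \ref{thm} applied to the orientation cover $X'$, and then to pass to the quotient by the deck transformation.

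First I would record the relevant structure on $X'$. Since $\pi\colon X'\to X$ is the Riemannian orientation covering of a closed, connected, non-orientable, analytic manifold, $X'$ is again closed, connected, analytic and Riemannian, it is orientable, and the non-trivial deck transformation $\tau$ is an analytic, involutive isometry which moreover acts \emph{freely} (deck transformations of a connected cover have no fixed points). I would then build the equivariant coframe demanded by Theorem \ref{thm}. Put $\alpha_1=\pi^*a_1$ and $\alpha_2=\pi^*a_2$; because $\pi\circ\tau=\pi$ these are orthonormal one-forms on $X'$ with $\tau^*\alpha_i=\alpha_i$ for $i=1,2$. Let $\alpha_3$ be the unique one-form completing $(\alpha_1,\alpha_2,\alpha_3)$ to a positively oriented orthonormal coframe, i.e. $\alpha_1\wedge\alpha_2\wedge\alpha_3=\mathrm{vol}_{X'}$. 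Applying $\tau^*$ to this identity and using that $\tau$ is an orientation-reversing isometry fixing $\alpha_1,\alpha_2$ forces $\tau^*\alpha_3=-\alpha_3$. Thus $\tau$ together with $(\alpha_1,\alpha_2,\alpha_3)$ satisfies exactly the hypotheses of the second part of Theorem \ref{thm}; this is precisely the role of the hypothesis on $a_1,a_2$, which supplies the two $\tau$-invariant members of the coframe.

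Next I would invoke Theorem \ref{thm} for $X'$. It yields an open, analytic $\G$-manifold $N'$ containing $X'$ as a $\varphi$-manifold, a closed co-associative $Y'\cong X'\times S^1$ with $X'\subset Y'$, and a unique structure-preserving involutive isometry $\underline\tau$ of $N'$ extending $\tau$. The point I would extract from the proof is the explicit shape of $\underline\tau$: with $f=\mathrm{diag}(1,1,-1)$, so $\det f=-1$, the extension acts on the ambient $M=X'\times S^1\times\R^3$ by $(x,r^0,r^1,r^2,r^3)\mapsto(\tau(x),-r^0,-r^1,-r^2,r^3)$, and $N'$, $Y'$ are $\underline\tau$-invariant. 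Since a fixed point would require $\tau(x)=x$, which is impossible, $\underline\tau$ acts \emph{freely} on $N'$; on $Y'$ it reduces to $(x,r^0)\mapsto(\tau(x),-r^0)$. Consequently $N:=N'/\underline\tau$ is an open analytic manifold, $\underline\pi\colon N'\to N$ is a two-fold covering, and the $\G$-structure descends so that $\underline\pi$ preserves it. As $\underline\tau$ restricts to $\tau$ on $X'$, the covering $\underline\pi$ extends $\pi$ and $X=X'/\tau$ embeds in $N$ as a $\varphi$-manifold (the pullback of $\varphi$ vanishes on $X$ because it does on $X'$). Restricting to the invariant co-associative $Y'$ gives a closed co-associative $Y=Y'/\underline\tau\cong(X'\times S^1)/\Z_2$, with $\underline\pi(Y')=Y$, $X\subset Y$, where $\Z_2$ acts by $(x,r^0)\mapsto(\tau(x),-r^0)$.

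Finally, for the cohomological assertion I would exhibit a loop meeting $X$ once mod $2$. The projection to $X'/\tau$ makes $Y\to X$ an $S^1$-bundle whose fibre over $[x_0]$ is the embedded circle $\gamma(s)=[(x_0,s)]$, $s\in\R/2\pi\Z$. Since $X\subset Y$ is the image of $X'\times\{0\}$, one checks $\gamma(s)\in X$ iff $s=0$, so $\gamma$ meets $X$ transversally in a single point; hence the mod-$2$ intersection number is $[\gamma]\cdot[X]=1$, and the Poincaré dual of $X$ in $H^1(Y,\Z_2)$ is non-trivial (cf. \cite{BW}). I expect the main obstacle to be the verification that $\underline\tau$ acts freely on all of $N'$, not merely on $X'$, so that $N=N'/\underline\tau$ is a genuine double cover rather than an orbifold quotient; this is what forces me to unwind the explicit formula for $\underline\tau$ from the proof of Theorem \ref{thm} and combine it with the fixed-point-freeness of the deck transformation. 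The descent of the $\G$-structure and the concluding intersection computation are then routine.
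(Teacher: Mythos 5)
Your proposal is correct and follows essentially the same route as the paper: pull back $a_1,a_2$, complete with $\alpha_3=*(\alpha_1\wedge\alpha_2)$ to get the $\tau$-equivariant coframe, apply Theorem \ref{thm} to $X'$, and quotient $N'$ by the unique involutive structure-preserving extension $\underline{\tau}$. The only (harmless) variations are that you establish freeness of $\underline{\tau}$ on all of $N'$ by unwinding its explicit product formula, where the paper instead passes to a $\underline{\tau}$-invariant tubular neighbourhood of $X'\times S^1$ on which the action is fixed-point free, and that you supply a direct mod-$2$ intersection argument for the non-triviality of the class in $H^1(Y,\Z_2)$, which the paper only justifies by citing \cite{BW}.
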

\begin{proof}
Let $\tau$ the non trivial deck transformation of $\pi$. Fix an orientation and a $\tau-$invariant metric on $X'$ and define $\alpha_j=\pi^*a_j$, for $j=1,2$, and $\alpha_3=*(\alpha_1\wedge\alpha_2)$. Obviously $(\alpha_1,\alpha_2,\alpha_3)$ satisfies the hypothesis of Theorem \ref{thm}, therefore there exists an open $\G-$manifold $N'$, containing $X'$ as $\varphi-$manifold (inducing the same Riemannian structure). Moreover there exists a unique isometry $\underline{\tau}$, extending $\tau$ and preserving the structure. By Lemma \ref{id}  such extension verifies $\underline{\tau}^2=\mathrm{Id}_M$.\par
Since the group generated by $\underline{\tau}$ acts freely we can consider a $\underline{\tau}-$invariant tubular neighbourhood $N(X'\times S^1)$, of $X'\times S^1$ in $N'$, on which the restriction of $\underline{\tau}$ has no fixed points. Consequently the space $N=N(X'\times S^1)/\underline{\tau}$ turns out to be a manifold. Furthermore $N$ inherits a torsion-free $\G-$structure and its submanifold $X'/\underline{\tau}$, naturally isometric to $X$, satisfies the condition of being a $\varphi-$manifold. Observe that the last is contained in the compact co-associative submanifold $(X'\times S^1)/\underline{\tau}$. 
\end{proof}\par
The following example shows a manifold obtained with trivial applications of the Cartan-K\"ahler argument.
\begin{ex}\label{flat torus}
Let $X'=\R^3/\Z^3$ be the three-torus and $\tau$ be the involution
$$\tau(x^1,x^2,x^3)=\left(x^2,x^1,x^3+\frac{1}{2}\right), \quad \forall(x^1,x^2,x^3)\in X'.$$
Then $\tau$ is the non trivial deck transformation of an orientation covering $\pi:X'\rightarrow X$, since it has no fixed points.
Now consider $\left\{dx^1,dx^2,dx^3\right\}$ on $X'$ and let $T^4$ be the four-torus\footnote{We could use $\R^4$ as well.} and $f\in\mathrm{Diff}(T^4)$, $f_\tau\in\mathrm{Diff}(X'\times T^4)$ given by
$$f(r^0,r^1,r^2,r^3)=(-r^0,-r^2,-r^1,-r^3),\quad f_\tau=\tau\times f.$$
Then the form 
\begin{eqnarray*}\tilde{\varphi}=(dr^0\wedge dx^1+dx^2\wedge dx^3)dr^1+(dr^0\wedge dx^2-dx^1\wedge dx^3)dr^2+\nonumber\\(dr^0\wedge dx^3+dx^1\wedge dx^2)dr^3-dr^1\wedge dr^2\wedge dr^3.
\end{eqnarray*}
descends to a stable, positive, closed and co-closed, three-form $\varphi$ on $M=(X'\times T^4)/f_\tau$.\par
Observe that the submanifolds
\begin{equation*}
\begin{cases}
X_4=\left(X'\times T^1\times\left\{(0,0,0)\right\}\right)/f_\tau,\\
X_5=\left(X'\times T^1\times\left\{(r,r,0)\,|\, r\in \R\right\}\right)/f_\tau,\\
X_6=\left(X'\times T^3\times\left\{0\right\}\right)/f_\tau,\\
\end{cases}
\end{equation*}
satisfy
$$X\subset X_4\subset X_5 \subset X_6 \subset M.$$

\end{ex}
The next example shows that co-associative manifolds also arise in non trivial torsion classes of $\G-$structures.
\begin{ex}\label{Ex2}
Consider $\su(2)$ spanned by the Pauli matrices $\sigma_1,\sigma_2,\sigma_3$. Since the constant structures are $2\epsilon_{ijk}$ (sign of $(ijk)\in\mathbb{S}_3$) the isomorphism 
$$
f_*=
\left(\begin{matrix}
0&0&-1\\
0&-1&0\\
-1&0&0\\
\end{matrix}\right),
$$
lies in $\mathrm{Aut}(\su(2))$. Hence it defines an (involutive) automorphism $f$ of $\SU(2)$. Denote by $r_1,r_2,r_3$ the left-invariant one-forms defined by the generators of $\su(2)$. Now let $S^1$ be the unit circle equipped with the angle coordinate $r^0$ and define $f_0\in \mathrm{Diff}(S^1)$ as $f_0(e^{2\pi i r^0})=e^{-2\pi i r^0}$. Obviously $f_0^*dr^0=-dr^0$. \par
If $X$ is a closed, non orientable, three-manifold let $X'\xrightarrow{\pi}X$ be its orientation covering with not trivial deck transformation $\tau$. Suppose there is a global co-frame $(\alpha_1,\alpha_2,\alpha_3)$ verifying
$$\tau^*\alpha_1=\alpha_3,\quad\tau^*\alpha_2=\alpha_2.$$
Now, on $M'=X'\times S^1\times \SU(2)$, define the stable and positive three-form
\begin{eqnarray*}\tilde{\varphi}=(dr^0\wedge \alpha_1+\alpha_2\wedge\alpha_3)r_1+(dr^0\wedge \alpha_2-\alpha_1\wedge\alpha_3)r_2+(dr^0\wedge \alpha_3+\alpha_1\wedge\alpha_2)r_3+\\\nonumber-r_1\wedge r_2\wedge r_3.
\end{eqnarray*}
Such $\tilde{\varphi}$ is invariant under the action of the involutive, with no fixed points, diffeomorphism $$f_\tau=\tau\times f_0\times f\in\mathrm{Diff}(M'),$$ so that it defines a stable and positive three-form $\varphi$ on the compact manifold $M=M'/f_\tau$. Moreover, identifying $X$ with the submanifold $(X'\times\left\{1\right\}\times\left\{1_{\SU(2)}\right\})/f_\tau$ or  $(X'\times\left\{-1\right\}\times\left\{1_{\SU(2)}\right\})/f_\tau$, it turns out that $\varphi|_X=0$.\par
Now if we consider $X'$ to be the flat torus and $\alpha_1,\alpha_2,\alpha_3$ as in Example \ref{flat torus},  defining\footnote{Here, if $\delta\in\Omega^*(M')$ is invariant under $f_\tau^*$, then $[\delta]$ denotes the correspondent differential form on $M$.} $$2\chi_3=[r_1\wedge r_2\wedge r_3]\quad\text{hence}\quad 2*\chi_3=[-dr^0\wedge\alpha_1\wedge\alpha_2\wedge\alpha_3],$$ the $\G-$structure defined above satisfies
$$
\begin{cases}
d{\varphi}=\frac{1}{2}{\phi}+*\chi_3,\\
d\phi=0.
\end{cases}
$$
In particular the submanifold $Y=(X'\times S^1\times\left\{1_{\SU(2)}\right\})/f_\tau$ is calibrated by $\phi$, thus it is volume-minimizing in its homological class.
\end{ex}

\section{An homogeneous classification}\label{sec5}
In this section we classify all the $G-$homogeneous not-orientable $\varphi-$manifolds of $\RP^7$, where $G$ is a closed subgroup of $\Spin(7)$ and $\RP^7$ is equipped with the canonical nearly parallel $\G-$structure.\par\medskip 
\begin{defn}
Let $G$ be a closed and connected subgroup of $\Spin(7).$ We refer to a three-dimensional $G-$homogeneous $\varphi-$manifold $X$ of $\RP^7$, or $S^7$, as \emph{homogenous $\varphi-$manifold}.
\end{defn}
\begin{rem}
In the previous sections we have seen examples of $\varphi-$manifolds arising as three-submanifolds of some co-associative ambient. In $\RP^7$ co-associative manifolds does not exist, therefore, in this setting, $\varphi-$manifolds cannot be extended, neither locally, to co-associative ones.
\end{rem}
We are able to prove the following theorem.
\begin{thm}\label{homogphiman}
There exists a non-orientable, homogeneous, $\varphi-$manifold $X$ in $\RP^7$. Moreover any other submanifold sharing the same properties can be turned into $X$ by an element of $\Spin(7)$.
\end{thm}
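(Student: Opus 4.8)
The plan is to realise everything in the homogeneous model $\RP^7=\Spin(7)/(\G\times\Z_2)$, on which $\Spin(7)$ acts by isometries preserving the nearly parallel structure and hence permuting the $\varphi$-planes of every tangent space. In this language a homogeneous $\varphi$-manifold is an orbit $X=G\cdot[x_0]$ of a closed connected $G\subset\Spin(7)$ that is three-dimensional and whose tangent space at $[x_0]$ is a $\varphi$-plane; since $G$ preserves $\varphi$, the condition $\varphi|_{T_{[x_0]}X}=0$ propagates to all of $X$, so being a $\varphi$-manifold is a purely tangential, one-point requirement. As $\Spin(7)$ is transitive on points and the isotropy $\G$ is transitive on the $\varphi$-planes (the distinguished orbit of the earlier sections), I may first normalise any candidate so that $[x_0]$ is fixed and $T_{[x_0]}X=\sigma_0=\mathrm{Span}(\partial_{x^1},\partial_{x^2},\partial_{x^3})$, the model $\varphi$-plane whose $\G$-stabiliser is $H=\SO(3)\times\Z_2$ described explicitly before. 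This reduces the whole statement to classifying the subalgebras of $\mathfrak{spin}(7)$ that project onto $\sigma_0$.

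For existence I would build the orbit by hand. Writing $\sigma_0^\perp=\mu\oplus\lambda$ as in Proposition \ref{rapp}, let $\U(1)$ be the circle in $K=\SO(3)\subset\G$ fixing $\partial_{x^1}$ and rotating $\mathrm{Span}(\partial_{x^2},\partial_{x^3})$; a short computation in Cayley coordinates produces a four-dimensional subalgebra $\mathfrak g\cong\mathfrak{su}(2)\oplus\R\subset\mathfrak{spin}(7)$ whose intersection with $\mathfrak g_2$ is exactly this $\mathfrak u(1)$ and which projects isomorphically onto $\sigma_0$ modulo $\mathfrak u(1)$. The corresponding connected group $G$ then has a three-dimensional orbit through $x_0\in S^7$ tangent to $\sigma_0$, and I would arrange the embedding so that the central involution $-1\in\Spin(7)$ (the antipodal map of $S^7$) carries this orbit to itself. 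Passing to $\RP^7$, the isotropy at $[x_0]$ is generated by $\U(1)$ and the residual deck element $\delta$; because $\delta$ fixes $[x_0]$ and preserves the structure, its differential lies in $H$, and the construction is tuned so that it lies in the orientation-reversing component $\kappa\,K$. Then $\delta$ reverses $\sigma_0$, the orbit $X$ is non-orientable (diffeomorphic to the twisted $S^2$-bundle over $S^1$), and the first assertion follows.

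For uniqueness I would run a rigidity argument on the second fundamental form. After the normalisation above the isotropy preserves $\sigma_0$ and $\sigma_0^\perp$, so $G_{[x_0]}^0$ is one of $\{1\},\SO(2),\SO(3)$ in $K$, forcing $\dim G\in\{3,4,6\}$. The case $\dim G=3$ is impossible: the linear isotropy factors through $\mathrm{Ad}$, which has determinant one on the three-dimensional $\mathfrak g$, so every such orbit is orientable. The case $\dim G=6$ is also impossible for a non-orientable example: full $\SO(3)$-equivariance restricts the second fundamental form $\mathrm{Sym}^2\sigma_0\to\sigma_0^\perp$ to an umbilical part landing in the line $\mu$, and the presence of the orientation-reversing $\kappa$ (needed for non-orientability), which acts by $+1$ on $\mathrm{Sym}^2\sigma_0=\R\oplus V_5$ but by $-1$ on $\mu$, kills even that part; the resulting totally geodesic orbit is the orientable $\RP^3$. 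Hence $\dim G=4$ and $G_{[x_0]}^0=\SO(2)$, precisely the situation realised above.

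It remains to show that this four-dimensional case is rigid up to $\Spin(7)$, and this is the step I expect to be the main obstacle. Now the second fundamental form need only be $(\SO(2)\rtimes\langle\kappa\rangle)$-equivariant, so an a priori positive-dimensional family of candidates survives the representation-theoretic constraints; I would have to show that the Gauss–Codazzi equations of the nearly parallel structure, whose intrinsic torsion is the fixed multiple in $d\varphi=c\,\phi$, together with homogeneity, determine this form uniquely up to the residual actions of $H$ and of $\Spin(7)$. Granting that the first and second fundamental forms at $[x_0]$ are thereby fixed, a Cartan-type uniqueness theorem for submanifolds of the symmetric space $\RP^7$ produces an ambient isometry carrying one orbit to the other; since this congruence also matches the tangent $\varphi$-planes and the induced $\G$-frames, it can be taken in $\Spin(7)$, and analyticity together with homogeneity upgrades the local congruence to a global one, which is exactly the claim.
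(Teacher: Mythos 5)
Your reduction to a one--point tangential condition and the trichotomy $\dim G\in\{3,4,6\}$ are sound, and your eliminations of $\dim G=3$ (via $\det\mathrm{Ad}=1$) and $\dim G=6$ (via equivariance of the second fundamental form, forcing a totally geodesic $\RP^3$) are correct; they even differ pleasantly from the paper, which instead uses that a closed non-orientable three-manifold has $b_1\geq 1$ (so a three-dimensional $G$ would have to be the maximal torus, whose orbits in $\RP^7$ are orientable) and that three-dimensional isotropy forces $X\cong S^3$. But the heart of the theorem is the four-dimensional case, and there your proposal has a genuine gap that you yourself flag. For existence you only assert that ``a short computation in Cayley coordinates produces'' a suitable subalgebra $\su(2)\oplus\R\subset\spin(7)$ and that the construction ``is tuned'' so that the residual deck element reverses the orientation of $\sigma_0$: you never exhibit the subalgebra, never say which conjugacy class of $\su(2)$ it contains (this is exactly the crux), and never verify closure under bracket, three-dimensionality and $\Z_2$-invariance of the orbit, or the orientation-reversing behaviour of the deck element. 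For uniqueness you propose a Gauss--Codazzi/Cartan rigidity scheme but concede that ``an a priori positive-dimensional family of candidates survives'' and that this step is ``the main obstacle''; nothing is proved there, and it is not at all clear the scheme closes.

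What is missing is precisely the group-theoretic classification that the paper uses to kill this case. The paper first classifies all three-dimensional closed connected subgroups of $\Spin(7)$ together with the dimensions $d$ of their centralizers (Proposition \ref{subgroups}); then, writing $G=G_s\cdot T$ via the Levi decomposition, with $G_s$ three-dimensional simple and $T$ a circle in its centralizer, it shows one may assume $\hg\subset\gt_s$, and Lemma \ref{isotropy} (a one-dimensional isotropy must fix at least four directions of $\R^8$) eliminates every case except (6), i.e.\ $\SO(3)\subset\SU(3)\subset\G$ acting as $\R^3\oplus\R^3\oplus\R\oplus\R$ with $d=1$. With $G$ thus pinned to a single conjugacy class, an explicit computation gives $|\varphi_x(v_1,v_2,v_3)|$ proportional to $|x_0x_6(3x_1^2-x_6^2)|$, identifying all $\varphi$-orbits, and a stabiliser computation shows that only the orbits with $x_0=0$ have non-orientable $\Z_2$-quotient, all of which coincide after acting by $T$; this yields both existence and uniqueness simultaneously. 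Incidentally this computation corrects a factual slip in your sketch: the non-orientable orbit in $\RP^7$ is $\RP^2\times S^1$, not the twisted $S^2$-bundle over $S^1$; the latter (the mapping torus of the antipodal map of $S^2$) is its preimage in $S^7$.
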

First let us recall how the nearly parallel $\G-$structure is defined. Let $\R^8$ be the fundamental $\Spin(7)-$module and $\Phi_0\in\Lambda^4(\R^8)^*$ the $\Spin(7)-$invariant form given by
$$\Phi_0=\frac{1}{2}\omega_0^2+\Re(\Psi_0),$$
where $2i\omega_0=dz_1d\bar{z_1}+dz_2d\bar{z_2}+dz_3d\bar{z_3}+dz_4d\bar{z_4}$ and $\Psi_0=dz_1dz_2dz_3dz_4$ with respect to complex coordinates $(z_1,z_2,z_3,z_4)$ of $\C^4=\R^8$. 
Let us identify $\R^8\setminus\left\{0\right\}$ with $\R^*\times S^7$ via
$$\R^*\times S^7\ni(r,x)\mapsto rx\in\R^8\setminus\left\{0\right\}.$$
Then there exists a unique $\varphi\in\Omega^3(S^7)$ such that
$$\Phi_0=r^3dr\wedge\varphi+r^4*_7\varphi.$$
Such $\varphi$ defines a $\Spin(7)-$invariant, nearly parallel ($d\varphi=4*_7\varphi$), $\G-$structure on $S^7$, therefore on $\RP^7$. For a more exhaustive treatment see \cite{Lot}.
\begin{rem}
Clearly any $X\subset\RP^7$ arises from some $\Z_2-$invariant $X'\subset S^7$. Therefore the classification of not-orientable homogeneous $\varphi-$manifolds in $\RP^7$ reduces to the classification of homogeneous $\varphi-$manifolds in $S^7$ on which $-1\in\Z_2$ acts as an orientation reversing map. By abuse of notation we continue to denote $X'$ by $X$.
\end{rem}
The following result, which {easily} follows from standard representation theory, represents the main tool to prove Theorem \eqref{homogphiman}.
\begin{prop}\label{subgroups}
If $K$ is a three-dimensional closed and connected subgroup of $\Spin(7)$ then it is conjugated to one of the following, listed together with their defining representations on $\R^8$ and the dimensions $d$ of their centralizers\footnote{In the following if $W$ is a complex $K-$module then $[W]$ denotes the real module equal to $W$ whereas $[[W]]$ denotes the real module such that $\C\otimes[[W]]=W$.},
\begin{enumerate}
\item $\U(1)^3\subset\SU(4)$ via $[\C^4]$ with $d=0$;
\item $\SU(2)\subset\SU(4)$ via $[S^3\C^2]$ with $d=1$;
\item $\SU(2)\subset\SU(4)$ via $[\C^2\oplus\C^2]$ with $d=6$;
\item $\SU(2)\subset\SU(3)\subset\G$ via $[\C^2]\oplus\R\oplus\R\oplus\R\oplus\R$ with $d=6$;
\item $\SU(2)\subset\SO(4)\subset\G$ via $[[S^2\C^2]]\oplus[\C^2]\oplus\R$ with $d=3$;
\item $\SO(3)\subset\SU(3)\subset\G$ via $\R^3\oplus\R^3\oplus\R\oplus\R$ with $d=1$;
\item $\SO(3)\subset\G$ via $[[S^6\C^2]]\oplus\R$ with $d=0$.
\end{enumerate}
\end{prop}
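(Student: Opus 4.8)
The plan is to reduce the statement to a problem in representation theory. Since $K$ is a closed subgroup of the compact group $\Spin(7)$ it is compact, and a three-dimensional compact connected Lie group is, up to isomorphism, either the torus $\U(1)^3$ or one of $\SU(2)$, $\SO(3)$. Because $\Spin(7)$ acts on $\R^8$ through its spin representation, every embedding $K \hookrightarrow \Spin(7)$ is encoded by the resulting orthogonal eight-dimensional representation $\rho\colon K \to \SO(8)$, subject to the constraint that $\rho(K)$ preserve the Cayley form $\Phi_0$, i.e.\ lands in the copy $\Spin(7)=\mathrm{Stab}_{\SO(8)}(\Phi_0)$. Two subgroups inducing equivalent representations are conjugate in $\SO(8)$, and one checks in each case that they are already conjugate inside $\Spin(7)$; thus classifying up to $\Spin(7)$-conjugacy amounts to listing the admissible representation types. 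The torus case is then immediate: a three-torus in the rank-three group $\Spin(7)$ is maximal, hence conjugate to the maximal torus of $\SU(4)=\Spin(6)\subset\Spin(7)$ acting on $\R^8=[\C^4]$, which is item (1).

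For $K=\SU(2)$ or $\SO(3)$ I would first enumerate the candidate representations. The real irreducibles are $\R$, $[\C^2]$, $[[S^2\C^2]]$, $[S^3\C^2]$, $[[S^4\C^2]]$, $[[S^6\C^2]]$, of real dimensions $1,4,3,8,5,7$, the even symmetric powers descending to $\SO(3)$. Writing $\R^8$ as a sum of these of total dimension $8$ --- and requiring an odd summand for $\rho$ to be faithful on $\SU(2)$ --- produces a short finite list: for $\SU(2)$ the four types $[S^3\C^2]$, $[\C^2]\oplus[\C^2]$, $[\C^2]\oplus\R^4$, $[[S^2\C^2]]\oplus[\C^2]\oplus\R$; for $\SO(3)$ the five types $[[S^6\C^2]]\oplus\R$, $[[S^4\C^2]]\oplus[[S^2\C^2]]$, $[[S^4\C^2]]\oplus\R^3$, $[[S^2\C^2]]^{\oplus2}\oplus\R^2$, $[[S^2\C^2]]\oplus\R^5$.

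The decisive step --- and the main obstacle --- is to determine which of these land in $\Spin(7)$. Here I would use triality: under the outer symmetry of $\Spin(8)$ the subgroup $\Spin(7)\subset\SO(8)=\SO(\R^8)$ is of spinor type, the stabilizer of a nonzero vector in one half-spin module $\Delta^+$ of $\Spin(8)$. Consequently $\rho(K)$ is conjugate into $\Spin(7)$ if and only if $\rho$ lifts to $\Spin(8)$ (the Stiefel--Whitney obstruction $w_2(\rho)$ vanishes) and $\Delta^{\pm}$ restricted to $K$ contains a trivial summand, i.e.\ $K$ fixes a nonzero spinor. Concretely, from the rotation numbers $(\lambda_1,\dots,\lambda_4)$ of a maximal torus of $K$ on $\R^8$ one forms the $\Delta^{\pm}$-weights $\tfrac12(\pm\lambda_1\pm\lambda_2\pm\lambda_3\pm\lambda_4)$ and decomposes into $K$-irreducibles. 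For $\SU(2)$, which is simply connected, $w_2$ is no obstruction and all four types survive --- two factoring through $\SU(4)=\Spin(6)$ and two through $\G\subset\Spin(7)$ --- giving items (2)--(5). For $\SO(3)$ the obstruction $w_2$ is nonzero precisely on $[[S^4\C^2]]\oplus\R^3$ and $[[S^2\C^2]]\oplus\R^5$, ruling them out; and although $[[S^4\C^2]]\oplus[[S^2\C^2]]$ lifts, its half-spin module contains no trivial summand (a zero weight occurs but sits inside the nontrivial irreducibles), so it too is excluded. The survivors are $[[S^6\C^2]]\oplus\R$ (the principal $\SO(3)\subset\G$) and $[[S^2\C^2]]^{\oplus2}\oplus\R^2$ (via $\SO(3)\subset\SU(3)\subset\G$), i.e.\ items (7) and (6). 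I would emphasize that this spinor/$w_2$ computation is exactly where care is needed: neither the existence of an invariant four-form nor a vanishing spinor weight is sufficient, as the $[[S^4\C^2]]\oplus[[S^2\C^2]]$ case demonstrates.

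Finally, for each surviving type I would realize the embedding explicitly through the indicated chain ($\subset\SU(4)$ or $\subset\G$) and compute the centralizer by Schur's lemma: the commutant $\mathrm{End}_K(\R^8)$ is a sum of matrix algebras over $\R$, $\C$, $\H$ fixed by the multiplicities and the real/complex/quaternionic types of the summands, its unitary group is the centralizer in $\SO(8)$, and intersecting with $\Spin(7)$ yields the dimensions $d$ (in the abelian case one records the dimension transverse to $K$, which is why $d=0$ for the maximal torus). Assembling these three steps produces exactly the seven conjugacy classes.
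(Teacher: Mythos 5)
Your identification of \emph{which} representation types occur in $\Spin(7)$ is correct, but it is reached by a genuinely different route from the paper's. The paper stays inside $\G$-geometry: it excludes $V_5\oplus V_3$, $V_5\oplus 3V_1$ and $V_3\oplus 5V_1$ by observing that each preserves a three-plane of $\R^8$, that the $\Spin(7)$-stabiliser of a three-plane has Lie algebra $\so(4)$ contained in some $\gt_2$, and that the three $\so(3)$-subalgebras of that $\so(4)$ realise only the types (4), (5), (6); uniqueness of the conjugacy class is then obtained from Dynkin's classification, Mashimo's list for subgroups of $\G$, and a separate argument for the two subgroups of $\SU(4)$. Your triality criterion (vanishing of $w_2$ plus an invariant spinor in a half-spin module) replaces the exclusion step by a mechanical weight computation, and your computations check out: the rotation-number sums are odd exactly for $[[S^4\C^2]]\oplus\R^3$ and $[[S^2\C^2]]\oplus\R^5$, while for $[[S^4\C^2]]\oplus[[S^2\C^2]]$ one finds $\Delta^{\pm}\cong V_5\oplus V_3$, the zero weights being absorbed into the nontrivial summands. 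As a bonus, your route is immune to a slip in the paper's list of the three $\so(3)\subset\so(4)$-representations (the one written $\R^3\oplus\R\oplus\R\oplus\R\oplus\R$ should be $[\C^2]\oplus\R\oplus\R\oplus\R$; taken literally it would collide with the paper's own exclusion of $V_3\oplus 5V_1$).

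The genuine gap is in your conjugacy step. The claim that two subgroups with equivalent eight-dimensional representations are conjugate in $\SO(8)$ fails precisely for the quaternionic types (2) and (3): there the commutant of $K$ in $\mathrm{O}(8)$ is $\Sp(1)$, respectively $\Sp(2)$, hence contained in $\SO(8)$, so the normaliser of $K$ in $\mathrm{O}(8)$ is orientation-preserving and the $\mathrm{O}(8)$-class of such a $K$ splits into two distinct $\SO(8)$-classes (interchanged by a reflection, equivalently by swapping $\Delta^+$ with $\Delta^-$). Representation-equivalence therefore gives only $\mathrm{O}(8)$-conjugacy, and your ``one checks in each case that they are already conjugate inside $\Spin(7)$'' is exactly the assertion that requires proof rather than a check: one must first use the chirality analysis to see that two such subgroups of $\mathrm{Stab}(\Phi_0)$ lie in the same $\SO(8)$-class (both fix $\Delta^+$-spinors), and then still upgrade $\SO(8)$-conjugacy to $\Spin(7)$-conjugacy. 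It is no accident that this is where the paper does its only detailed work: for (2) and (3) it uses $-\mathrm{id}_{\R^8}\in K$ to pass to the image $H\subset\SO(7)$, where equivalence of representations does yield $\SO(7)$-conjugacy (conjugate in $\mathrm{O}(7)$ and correct by $-\mathrm{id}_{\R^7}$, which is central of determinant $-1$), and then lifts the conjugation back to $\Spin(7)$. Your framework can be completed along the same lines, by using triality to trade the spinor-stabilising $\Spin(7)$ for the vector-stabilising one and then running the $\SO(7)$ argument, but as written the step fails. A lesser issue of the same kind: Schur's lemma computes the commutant of $K$ in $\SO(8)$, but its intersection with $\Spin(7)$ is not determined by Schur data alone; the paper's method---counting trivial summands in the decomposition of $\spin(7)$ as a $K$-module---is the computation you actually need for the values of $d$.
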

\begin{proof}
Clearly, by compactness, either $K$ is abelian or it is locally isomorphic to $\SU(2)$. In the first case $K$ is a maximal torus of $\Spin(7)$ and the proposition follows, so let us assume the second holds.\par
Firstly we prove that the above representations are the only admissible ones. Denote by $\lie{k}$ the Lie algebra of $K$, which is isomorphic to $\su(2)$. The only irreducible real $\lie{k}-$modules of dimension less than or equal to $8$ are $V_k$ for $k=\mathrm{dim}(V_k)=1,3,4,5,7$ or $8$, and the following are not allowed:
\begin{enumerate}
\item[(i)] $V_5\oplus V_3$;
\item[(ii)] $V_5\oplus V_1\oplus V_1\oplus V_1$;
\item[(iii)] $V_3\oplus V_1\oplus V_1\oplus V_1\oplus V_1\oplus V_1$.
\end{enumerate}\par
Indeed if one of these occurred, $\lie{k}$ should be contained in the algebra of a stabiliser of a three-plane of $\R^8$, which is isomorphic to $\so(4)=\su(2)\oplus\su(2)$ contained in some $\gt_2$. But the only three-dimensional simple subalgebras of $\so(4)$, up to inner automorphism, are three not-equivalent $\so(3)$: namely $\left\{(X,0)\right\}$, $\left\{(0,X)\right\}$ and $\left\{(X,X)\right\}$  with representations on the correspondent $\R^7$ given by $\R^3\oplus[\C^2]$, $\R^3\oplus\R\oplus\R\oplus\R\oplus\R$ and $\R^3\oplus\R^3\oplus\R$ respectively.\par
Now we prove that any previous representation is given by a unique, up to $\Spin(7)-$action, Lie subgroup. This essentially follows from Dynkin's classification of $\mathfrak{sl}(2,\C)$ subalgebras of classical simple Lie algebras (see \cite{OniVil} and the references therein).\par
Clearly if $K$ represents a maximal torus the claim is straightforward. Even if $K$ lies in some $\G$ there is nothing to prove: any subgroup of $\G$, which is locally isomorphic to $\SU(2)$, must be conjugated to one of those listed above (see also \cite{Ma}), and all the subgroups of $\Spin(7)$ isomorphic to $\G$ are conjugated.\par
Let us suppose that $K$ corresponds to one of the remain representations, that are (2) and (3). Clearly $-\mathrm{id}_{\R^8}$ belongs to $K$, therefore the projection of $K$ into $\SO(7)$, say $H$, is a copy of $\SO(3)$. Since the only irreducible real $\SO(3)-$modules of dimension less than or equal to $7$ are $V_k$ with $k$ equal to $1,3,5$ or $7$ there are few admissible type-decompositions of $\R^7$. Moreover the restriction of the standard metric on each factor coincide, up to a scalar multiple, to the unique $H$-invariant metric, since the modules are of real type.\par 
One can observe that $V_3\oplus V_3\oplus V_1$ and $V_7$ are modules realized by subgroups of some $\G\subset\SO(7)$ and thus they cannot occur in our hypothesis, but this is not relevant. In each case $H$ must be cojugated, by an element of $\mathrm{O}(7)$, to the standard $\SO(3)$ defined by the correspondent representation; actually in $\SO(7)$, combing with $-\mathrm{id}_{\R^7}$. Therefore also $K$ will be conjugated, by an element of $\Spin(7)$, to the appropriate $\SU(2)$.\par
Finally let $\lie{z}$ be the Lie algebra of the centralizer of $K$ in $\Spin(7)$.\par
In case 1 $d=0$.\par
In case 2 it is well known that $\spin(7)=\su(4)\oplus\lie{m}$, where $\lie{m}\cong[[\Lambda^2\C^4]]$ as $\SU(4)-$modules. Therefore $$\spin(7)=\underbrace{\su(2)\oplus\lie{n}}_{\su(4)}\oplus[[S^4\C^2]]\oplus[[\C]],$$ with $\mathrm{Ad}(\SU(2))\lie{n}=\lie{n}$, and consequently $\R\cong\lie{z}$.\par
For the following cases let us recall that $\spin(7)=\gt_2\oplus\R^7$ as $\gt_2-$modules and that $\gt_2=\su(3)\oplus[\C^3]$ as $\SU(3)-$modules.\par
In case 3 
$$\spin(7)=\underbrace{\su(2)\oplus\R^3\oplus [S^2\C^2]\oplus[\C]\oplus\R}_{\su(4)}\oplus [\C^2\oplus\C^2\oplus\C]\oplus[[\C]],$$
and thus $\lie{z}\cong\R^6$.\par
In case 4
$$\spin(7)=\underbrace{\underbrace{\su(2)\oplus[\C^2]\oplus\R}_{\su(3)}\oplus[\C^2]\oplus\R\oplus\R}_{\gt_2}\oplus[\C^2]\oplus\R\oplus\R\oplus\R,$$
and consequently $\lie{z}\cong\R^6$.\par
In case 5
$$\spin(7)=\underbrace{\underbrace{\su(2)\oplus\R\oplus\R\oplus\R}_{\so(4)}\oplus[\C^2\oplus \C^2]}_{\gt_2}\oplus\R^3\oplus[\C^2],$$
which means $\lie{z}\cong\R^3$.\par
In case 6
$$\spin(7)=\underbrace{\underbrace{\so(3)\oplus\R^5}_{\su(3)}\oplus\R^3\oplus\R^3}_{\gt_2}\oplus\R^3\oplus\R^3\oplus\R,$$
which gives $\lie{z}\cong\R^1$.\par
Finally, in case 7, a direct computation shows that $d=0$.
\end{proof}
\begin{lemma}\label{isotropy}
Let $G$ be a closed subgroup of $\Spin(7)$ and $X=G/H$ be an homogeneous not-associative three-submanifold of $S^7$. Then $\mathrm{dim}(H)\leq 3$. Moreover if $\mathrm{dim}(H)=1$ then $H^0$ must fix at least four directions in $\R^8$.
\end{lemma}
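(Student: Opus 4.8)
The plan is to analyse the linear isotropy representation of $H$ at a point $x\in X$ and to embed $H$ into the $\G$-stabiliser of the tangent plane $T_xX$ described in \S\ref{sec2}. First I would use that $G\subseteq\Spin(7)$ acts on $S^7\subset\R^8$ through the real $8$-dimensional spin representation, which is faithful, so that $\Spin(7)\hookrightarrow\SO(8)$. Since $H=\mathrm{Stab}_G(x)$ fixes the vector $x$, it acts on $T_xS^7=x^{\perp}\cong\R^7$, and this linear action is again faithful: an element acting trivially on $x^{\perp}$ fixes all of $\R^8=\R x\oplus x^{\perp}$ and hence is the identity. Because the nearly parallel structure $\varphi$ is $\Spin(7)$-invariant, each $h\in H$ satisfies $h^*\varphi=\varphi$, so $dh_x$ preserves $\varphi_x$ and lies in the copy of $\G\subset\SO(7)$ stabilising $\varphi_x$; moreover $h(X)=X$ forces $dh_x(T_xX)=T_xX$. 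Hence the faithful representation $H\to\SO(7)$ lands in $\mathrm{Stab}_{\G}(T_xX)$, the stabiliser of the (unoriented) tangent plane.

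Next I would invoke the orbit picture of \S\ref{sec1}--\S\ref{sec2}. As $X$ is not associative, $T_xX$ lies in a non-associative $\G$-orbit, and in each such case the stabiliser of the plane has identity component conjugate to $K=\{\mathrm{diag}(a,1,a)\,:\,a\in\SO(3)\}\cong\SO(3)$, with full stabiliser at most $K\times\Z_2$. Here $K$ acts on $\R^7=\xi\oplus\mu\oplus\lambda=\R^3\oplus\R\oplus\R^3$ as $(\text{standard})\oplus(\text{trivial})\oplus(\text{standard})$, exactly as in Proposition \ref{rapp}. Since $\dim(K\times\Z_2)=3$ and $H$ embeds into this group, we obtain $\dim H\le 3$, which is the first claim.

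Finally, assume $\dim H=1$. As $G$ is compact (being closed in $\Spin(7)$), so is $H$, whence $H^0\cong S^1$; being connected it maps into the identity component $K\cong\SO(3)$, so its image is a nontrivial circle subgroup of $K$. Under $\R^8=\R x\oplus\xi\oplus\mu\oplus\lambda$, the group $H^0$ fixes the radial line $\R x$ and the line $\mu$, and acts on each of $\xi\cong\R^3$ and $\lambda\cong\R^3$ by the standard representation, in which a nontrivial circle fixes precisely a $1$-dimensional axis. Summing, $H^0$ fixes a subspace of dimension $1+1+1+1=4$, i.e. at least four directions in $\R^8$.

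The crux --- and the only step that is neither the faithfulness of the spin representation nor an elementary $\SO(3)$-weight count --- is the reduction in the second paragraph: one must check that \emph{every} non-associative tangent plane has isotropy whose identity component is (conjugate to) the single subgroup $K$ with the stated action on $\R^7$. The point I would make is that the $\varphi$-plane orbit $\mathcal{O}_0$ and the principal orbit lie on a common $K$-invariant normal geodesic in the Grassmannian, so their isotropy identity components both coincide with $K$ and the representation computed in \S\ref{sec2} applies uniformly; this is where I expect the genuine work to be.
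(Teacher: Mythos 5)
Your proof is correct and follows essentially the same route as the paper's: the paper likewise observes that the isotropy representation $\rho:H\to\mathrm{GL}(T_xS^7)$ is faithful and that $\rho(H^0)\subseteq \mathrm{Stab}(\varphi_x)\cap\mathrm{Stab}(T_xX)^0\cong\SO(3)$ acting on $T_xS^7$ as $\R^3\oplus\R^3\oplus\R$, from which both claims follow by exactly the dimension count and circle-axis count you give. The only difference is expository: the step you isolate as the crux --- that every non-associative plane has stabiliser whose identity component is conjugate to $K$ with the representation of Proposition \ref{rapp} --- is precisely what the paper dispatches with ``from Section \S\ref{sec1} it follows'', and your cohomogeneity-one (normal geodesic) justification of that step is sound.
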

\begin{proof}
Let $x$ be the origin point of $X$ and $\rho:H\rightarrow T_xS^7$ be the isotropy representation. Clearly $\rho$ is injective and from Section \S\ref{sec1} it follows that $$\rho(H^0)\subseteq \mathrm{Stab}(\varphi_x)\cap\mathrm{Stab}(T_xX)^0\cong\SO(3),$$
where $\SO(3)$ acts on $T_xS^7$ as $\R^3\oplus\R^3\oplus\R$. Then the Lemma follows.  
\end{proof}
Now we are ready to prove the main theorem of this section.
\begin{proof}[Proof of \eqref{homogphiman}]
Let $G$ be a closed subgroup of $\Spin(7)$ whose three-dimensional $\Z_2-$invariant orbit $X$ trough $x\in S^7$ satisfies $\varphi|_X=0$ and let $H$ be the stabiliser of $x$ in $G$. Let us denote the Lie algebras of $G$ and $H$ by $\gt$ and $\hg$ respectively.\par 
First suppose that $\mathrm{dim}(\gt)=3$. Then by Proposition \eqref{subgroups} $G$ belongs to one of seven conjugacy classes. If $X/\Z_2$ is not-orientable, by compactness, its first Betti number cannot vanish, as well as that of $G$. Therefore $G$ would be conjugated to the assigned maximal torus $\U(1)^3$ of $\Spin(7)$; so let us assume they are equal. It is easy to see that $\Z_2$ acts trivially on its Lie algebra, and therefore any $\U(1)^3-$orbit in $\RP^7$ is orientable.\par
Suppose $\mathrm{dim}(\gt)>3$. Lemma \eqref{isotropy} implies that $1\leq\mathrm{dim}(\hg)\leq 3$. More precisely either $\mathrm{dim}(\hg)=1$ or $\mathrm{dim}(\hg)=3$; indeed $\SO(3)$ has rank one. In the second case $X$ should be diffeomorphic to $S^3$ and thus have vanishing first Betti number; in particular $X/\Z_2$ will be orientable. Therefore assume $\mathrm{dim}(\hg)=1$ and consequently $\mathrm{dim}(\gt)=4$.\par
Let $G_s\ltimes T$ be the Levi decomposition of $G$, where $G_s$ is a semisimple closed subgroup, with Lie algebra $\gt_s$, and $T$ a torus, with Lie algebra $\tg$, lying in its centralizer in $\Spin(7)$. Since $\Spin(7)$ has rank three $G_s$ must be three-dimensional and therefore conjugated to one of the simple Lie groups listed in Proposition \eqref{subgroups} different from (7).\par
Let $\hg_s$ and $\hg_t$ be the projections of $\hg$ on $\gt_s$ and $\tg$ respectively. If $\hg_t$ was one-dimensional then $\left\{A.x\;|\;A\in\gt_s\right\}$ would generate all $T_xX$ and consequently $X$ should be $G_s-$homogeneous as well, and thus described by one of the previous cases. Therefore we can suppose that $\hg\subset\gt_s$. \par
Then it is easy to see that the only two simple subgroups which admit non-trivial stabilisers are (5) and (6). But one can observe that in (5) any one-dimensional subspace generates a connected subgroup which fixes two lines only, in contrast with Lemma \eqref{isotropy}. Therefore the only admissible groups are conjugated to (6).\par
Let $(x_0,\dots,x_7)$ be standard coordinates on $\R^8$. With no loss of generality we can assume that $G_s$ acts on $x_0=x_7=0$: in fact $\Spin(7)$ is transitive on the Grassmannian of three-planes. Then a generic element $A\in \gt$ can be written as
$$ A=\sum_{j=0}^3 a_jE_j=
\left( \begin {array}{cccccccc} 0&0&0&0&0&0&0&3\,a_{{0}}
\\ \noalign{\medskip}0&0&a_{{1}}&a_{{2}}&0&0&-a_{{0}}&0
\\ \noalign{\medskip}0&-a_{{1}}&0&-a_{{3}}&0&a_{{0}}&0&0
\\ \noalign{\medskip}0&-a_{{2}}&a_{{3}}&0&a_{{0}}&0&0&0
\\ \noalign{\medskip}0&0&0&-a_{{0}}&0&a_{{3}}&a_{{2}}&0
\\ \noalign{\medskip}0&0&-a_{{0}}&0&-a_{{3}}&0&a_{{1}}&0
\\ \noalign{\medskip}0&a_{{0}}&0&0&-a_{{2}}&-a_{{1}}&0&0
\\ \noalign{\medskip}-3\,a_{{0}}&0&0&0&0&0&0&0\end {array} \right),
$$
where $E_0$ and $E_1,E_2,E_3$ span $\tg$ and $\gt_s$ respectively. Observe that $T$ acts as a diagonal $\SO(2)$ on $(x_0,x_7,x_6,x_1,x_2,x_5,x_4,x_3)$; in particular $\Z_2\subset T$ and $G_s\cap T=\left\{1\right\}$, whereas $G_s$ acts as a diagonal $\SO(3)$ on $(x_1,x_2,x_3,-x_6,x_4,x_5)$. Therefore $x$ must satisfy $(x_1,x_2,x_3)\wedge(-x_6,x_4,x_5)=0$ to be fixed by some non-trivial subgroup, and $x_0^2+x_7^2<1$ to have three-dimensional orbit. Clearly, acting with $G$, we can ensure that $x=(x_0,x_1,0,0,0,0,x_6,0)$.\par
Since the tangent space to the orbit at $x$ is three-dimensional and generated by 
$$v_1=(8x_0^2+1)^{-\frac{1}{2}}E_0x,\;v_2=(x_1^2+x_6^2)^{-\frac{1}{2}}E_1x,\;v_3=(x_1^2+x_6^2)^{-\frac{1}{2}}E_2x,$$
it turns out that
$$|\varphi_x(v_1,v_2,v_3)|=|\Phi_0(x,v_1,v_2,v_3)|=4\,{\frac {|x_{{0}}x_{{6}} \left( 3\,{x_{{1}}}^{2}-{x_{{6}}}^{2}
 \right) |}{ \left( {x_{{1}}}^{2}+{x_{{6}}}^{2} \right) \sqrt {8\,{x_{{0
}}}^{2}+1}}}
.$$
We therefore see that the homogeneous $\varphi-$manifolds are those given by either $x_0x_6=0$ or $3x_1^2=x_6^2$. Let us describe the topology of these orbits. Consider a generic element $at\in H$, where $a\in G_s$ and $t\in T$. Then
$$x_0=(atx)_0=(tx)_0=cx_0,\quad c\in[-1,1].$$
Therefore, if $x_0\neq 0$, $t$ equals the identity. Thus $H=H^0\cong\SO(2)$. In this case the orbit is isomorphic to $(\SO(3)/\SO(2))\times T\cong S^2\times S^1$ and the action of $\Z_2$ on $X$ equals the antipodal map on $S^1$. Otherwise, if $x_0=0$, from $atx=x$ it follows that
$$a=\left(\begin{matrix}
a_{11}&0&0\\
0&a_{22}&a_{23}\\
0&a_{32}&a_{33}\\
\end{matrix}\right),\;t=\left(\begin{matrix}
c&-s\\
s&c
\end{matrix}\right),\; a_{11}c=1.
$$
Consequently
$$H=\mathrm{O}(2)\tilde{\times}\Z_2=(\mathrm{O}(2)\times\Z_2)/((1_{\mathrm{O}(2)},1_{\Z_2})\sim(-1_{\mathrm{O}(2)},-1_{\Z_2}))\subset\SO(3)\times T,$$
and the action of $\Z_2$ on the orbit is given by
$$G/H\ni g H\mapsto (-1_{\Z_2}) g H\in G/H$$
Thus
$$\left(G/H\right)/\Z^2=\left(\SO(3)\times T\right)/\left(\mathrm{O}(2)\times \Z_2\right)\cong\left(\SO(3)/\mathrm{O}(2)\right)\times\left(T/ \Z_2\right)\cong \RP^2\times S^1.$$
Acting with $T$ we can ensure that $x_6=0$ and then see that the orbit is unique, passing through $x=(0,1,0,0,0,0,0,0)$ and defined by the intersection of the totally geodesic $\RP^5$, of equations $x_0=x_7=0$, and three quadrics
$$\left\{x_1x_5+x_2x_6=0\right\}\cap\left\{x_1x_4+x_3x_6=0\right\}\cap\left\{x_2x_4-x_3x_5=0\right\}.$$
\end{proof}

\section*{Acknowledgements}
The author is grateful to Prof. F. Podestà for all the useful suggestions and conversations he shared with him, to Prof. A. Fino for her constant interest in his work and finally to ‘‘Università degli studi di Firenze" for all the support he received. 

\newpage

\end{document}